\xpatchcmd{\maketitle}{\null\vfill}{}{}
\definecolor{PineGreen}{rgb}{0.0,0.47,0.44}
\definecolor{MidnightBlue}{rgb}{0.1,0.1,0.44}
\definecolor{magenta}{rgb}{1.0,0.0,1.0}
\numberwithin{equation}{section}
\theoremstyle{plain}
\newtheorem{question}[equation]{Question}
\theoremstyle{definition}  %
\newtheorem{theorem}[equation]{Theorem}
\newtheorem*{theorem*}{Theorem}
\newtheorem{lemma}[equation]{Lemma}
\newtheorem{corollary}[equation]{Corollary}
\newtheorem{proposition}[equation]{Proposition}
\theoremstyle{definition}
\newtheorem*{definition*}{Definition}
\newtheorem{definition}[equation]{Definition}
\newtheorem{remark}[equation]{Remark}
\newtheorem{example}[equation]{Example}
\newtheorem*{notation*}{Notation}
\renewcommand{\tt}{\texttt}
\newcommand{\Mod}[1]{\ (\mathrm{mod}\ #1)}
\DeclareMathOperator{\im}{Im}
\DeclareMathOperator{\zim}{\overline{\im}}
\DeclareMathOperator{\blow}{Bl}
\newcommand{\ratto}{\dashrightarrow}
\def\a{\mathbb{A}}
\def\kk{\mathbb{K}}
\def\p{\mathbb{P}}
\def\RR{\mathbb{R}}
\DeclareMathOperator{\Hom}{Hom}
\DeclareMathOperator{\depth}{depth}
\newcommand{\inv}{^{-1}}
\newcommand{\Cc}{\mathbb{C}}
\newcommand{\Zz}{\mathbb{Z}}
\newcommand{\cg}{\mathcal{G}}
\newcommand{\ct}{\mathcal{T}}
\newcommand{\xa}{\mathfrak{a}}
\newcommand{\xb}{\mathfrak{b}}
\newcommand{\xr}{\mathfrak{r}}
\newcommand{\xn}{\mathfrak{n}}
\newcommand{\xc}{\mathfrak{c}}
\newcommand{\xM}{\mathfrak{M}}
\newcommand{\bs}[1]{\mathcal{B}_{#1}}
\DeclareMathOperator{\tr}{tr}
\DeclareMathOperator{\imps}{IMPS}
\DeclareMathOperator{\mps}{MPS}
\newcommand{\Par}{\psi_{r,k,q}}
\newcommand{\Parrr}{\psi_{2,2,3}}
\newcommand{\Parr}{\psi_{2,4,3}}
\DeclareMathOperator{\fl}{fl}
\g@addto@macro{\normalsize}{%
    \setlength{\abovedisplayskip}{4pt}
    \setlength{\abovedisplayshortskip}{2pt}
    \setlength{\belowdisplayskip}{4pt}
    \setlength{\belowdisplayshortskip}{2pt}}
\begin{document}

\title{Computing images of polynomial maps}

\author[1]{Corey Harris}
\author[1,2]{Mateusz Micha{\l}ek\thanks{MM was supported by Polish National Science Center project
2013/08/A/ST1/00804 affiliated at the University of Warsaw.}}
\author[1]{Emre Can Sert\"oz}
\affil[1]{Max Planck Institute for Mathematics in the Sciences}
\affil[2]{Institute of Mathematics of the Polish Academy of Sciences}
\date{}                     
\setcounter{Maxaffil}{0}
\renewcommand\Affilfont{\itshape\small}





\maketitle
\vspace{-0.55in}
\begin{abstract}
The image of a polynomial map is a constructible set.
While computing its closure is standard in computer algebra systems, a procedure for computing the constructible set itself is not. We provide a new algorithm, based on algebro-geometric techniques, addressing this problem.
We also apply these methods to answer a question of W.~Hackbusch on the non-closedness of site-independent cyclic matrix product states for infinitely many parameters.
\end{abstract}
\section{Introduction}\label{sec_intro}

Determining the image of a polynomial map is of fundamental importance in numerous disciplines of mathematics.
In particular, this problem comes up in dealing with parametrizations of (unirational) varieties, a situation which arises frequently in theory and in application, for instance in low-rank tensor approximation.

Given a projective variety $X \subset \p^n_\Cc$, we compute the image of a polynomial map $f: X \ratto \p^m_\Cc$.
This setting easily extends to rational maps from affine varieties to affine spaces---see Section \ref{subsec:affrat}.

Our primary goal is to develop an algorithm to compute this image. We have two design principles regarding the output: first, it should give immediate insight to a human, and second, a computer using our output should be able to determine instantly if a point in the codomain belongs to the image. Let us emphasize here that the output we produce will make it clear at first sight whether or not the image is closed.

We begin with a simple example. Consider the Cremona transformation $f: \p^2 \ratto \p^2$ defined by $[x_0,x_1,x_1] \mapsto [x_1x_2,x_0x_2,x_0x_1]$. For more complicated examples see Section \ref{sec:examples}.

Let us write the image of $f$ as a constructible set $V_0\setminus(V_1 \setminus V_2)$ where $V_0 = \p^2$, $V_1=Z(y_0y_1y_2)$ and $V_2 = Z(y_0y_1,y_0y_2,y_1y_2)$.
Here we represent closed algebraic sets as the zeros of an ideal, written $Z(I)$.
It is more convenient however to decompose the $V_i$'s into their irreducible components and store the containment relations in the form of a graph.

\begin{figure}[h]
\begin{subfigure}[c]{.5\textwidth}
  \centering
  \includegraphics{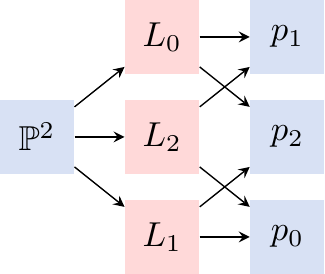}
  \caption{Constructible graph}
  \label{fig:CremonaIntroGraph}
\end{subfigure}%
\begin{subfigure}[c]{.5\textwidth}
  \centering
  \includegraphics{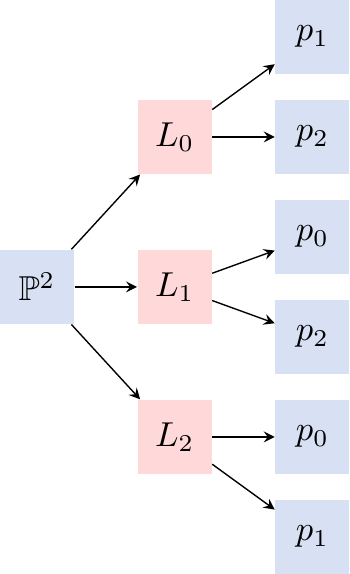}
  \caption{Constructible tree}
  \label{fig:CremonaIntroTree}
\end{subfigure}
\caption{Cremona transformation}
\label{fig:CremonaGraphs}
\end{figure}

Then $V_1 = L_0 \cup L_1 \cup L_2$ and $V_2=p_0\cup p_1 \cup p_2$
where the $L_i$'s (resp.~$p_i$'s) are the three lines (resp.~points) in $\p^2$ defined by the vanishing of coordinates.
The image of the Cremona transformation can now be presented as a graph as in Figure \ref{fig:CremonaIntroGraph}.

In our implementation the image is represented in the form of a tree. For the Cremona transformation it is depicted in Figure \ref{fig:CremonaIntroTree}, meanwhile the output of our implementation is presented in Figure \ref{fig:verbCremonaM2Output}.

\begin{verbbox}
   (2) ideal()
 - (1) |====ideal y2
 + (0) |    |====ideal(y2,y0)
 + (0) |    |====ideal(y2,y1)
 - (1) |====ideal y1
 + (0) |    |====ideal(y1,y0)
 + (0) |    |====ideal(y2,y1)
 - (1) |====ideal y0
 + (0) |    |====ideal(y2,y0)
 + (0) |    |====ideal(y1,y0)
\end{verbbox}
\begin{figure}[ht]
  \centering
  \theverbbox
  \caption{The output of our \tt{TotalImage} for the Cremona transformation}
  \label{fig:verbCremonaM2Output}
\end{figure}

Standard methods exist for determining the \emph{closure} of the image. They rely on Gr\"obner basis computations and are implemented in any general mathematical software, cf.~\S 3.3 \cite{cox1992ideals}.  As far as we are aware, however, the only software which computes the image of a polynomial map is \tt{PolynomialMapImage} in the Maple\texttrademark{} module \tt{RegularChains} \cite{chen2008constructiblesettools, Maple10}. This program uses \emph{triangular decompositions}---a technique well-developed in algorithmics \cite{wen1987zero}, but which is not a part of the canon of algebraic geometry.

Our algorithm  relies on a central technique in algebraic geometry: resolving a rational map through blow-ups. Our implementation of the algorithm is called \tt{TotalImage}\footnote{Available at: \url{https://github.com/coreysharris/TotalImage}}.
It compares favorably to \tt{PolynomialMapImage} in our tests.
See Section \ref{sub:comparison} for a detailed comparison.

We also demonstrate how one can make theoretical use of the idea behind this algorithm to prove that an image is not closed without computing the entire image. In the process, we prove that the set of tensors that admit a \emph{site-independent matrix product state} ($\imps$) representations with fixed rank is not closed (Theorem \ref{tw:main}). This answers a question posed by W.~Hackbusch.
A cousin problem of deciding whether the set of tensors that admit a \emph{matrix product state} ($\mps$) representation form a closed set, posed by L.~Grasedyck, was settled in \cite{landsburg2012geometry}.
\enlargethispage{2\baselineskip}

We will now describe three domains of application in which the determination of the image of a map plays a crucial role.

\subsection{Physics}  \label{sec:phys}

Tensors play a prominent role in physics, for instance in the representation of quantum states. An issue is that  relevant tensors often appear in spaces of huge dimension, making them practically impossible to work with directly.

A way around this problem is to find compact representations of a tensor, such as low-rank presentations (also known as the canonical polyadic decomposition) or tensor networks \cite{hackbusch2012tensor}. In practice, one often gives an algebraic parametrization of a family of well-behaving tensors, such as those admitting compact representations. It is of great concern from the point of view of numerical mathematics to decide whether the image of such a parametrization map is closed.

In other words, one wishes to know in advance whether a sequence of well-behaving tensors $T_n$, approximating an arbitrary tensor $T$, will converge to a good approximation $T_{\infty}$ within the set of well-behaving tensors. For example, for a specified $r$ and a real tensor $T$, there may be no best-possible real-rank $r$ approximation of $T$. In fact, this happens with \emph{positive probability} in the choice of $T$ \cite{de2008tensor}. The complex case, where such phenomena do not take place, along with examples when best rank approximations do not exist, is discussed in \cite{qi2017complex}.

\subsection{Statistics} \label{sec:stats}
A statistical model is a parametric family of probability distributions.
A large class of statistical models are parametrized by algebraic maps \cite{drton2007algebraic, sturmfels2005toric, 74228}.

The primary question about a statistical model is if a given, i.e.~observed, probability distribution fits the model. To attack this question, one wishes to describe the real image of the parametrization corresponding to the model within the space of all probability distributions.

In this paper we only deal with the \emph{complex} image of algebraic maps. However, the complex image, being larger, often gives a good first test for the fitness of a statistical model.

\subsection{Computational Sciences} \label{sec:comp_sci}

Tensors represent multi-linear maps. Good representations of a tensor, for instance its rank decomposition, yield algorithms of lower complexity \cite{landsberg2012tensors, landsberg2017geometry}.

A famous example demonstrating this relationship is matrix multiplication. The multiplication of two $n\times n$ matrices is a bilinear operation and thus is represented by a $3$-dimensional tensor. The complexity of the optimal algorithm for multiplying matrices is known to be governed by the rank (or border rank) of the associated tensor, see \cite{landsberg2012tensors,landsberg2017geometry}. (Let us point out that it is not known in general if the rank and the border rank of the matrix multiplication tensor coincide.)

Computing the tensor rank (as well as determining if the tensor rank equals the border rank) of a given tensor $T$ is equivalent to the problem of deciding whether $T$ belongs to the image of an algebraic map (or its closure).\\

We start by presenting the preliminaries in Section \ref{sec:Prelim}.
In Section \ref{sec:Im}, we present our algorithm for computation of images.
In Section \ref{sec:Hack}, we answer the question of W.~Hackbusch proving that $\imps$ tensors do not form a closed set in general. In Section \ref{sec:examples} we present in detail two explicit examples inspired by statistics and physics. Some of the proofs and remarks are postponed to the Appendix.

\section*{Acknowledgments}
We thank Wolfgang Hackbusch for posing the question which motivated this work, and for the stimulating discussions. We are grateful to Bernd Sturmfels and Michael Joswig for many suggestions and encouraging remarks.
\section{Preliminaries}\label{sec:Prelim}

A map $f: \Cc^n \to \Cc^m$ defined by $x \mapsto (f_1(x),\dots,f_m(x))$ where the $f_i$ are polynomials in the coordinates of $x=(x_1,\dots,x_n)$ is called a \emph{polynomial map}. If the $f_i$ are given as the quotient of two polynomials, then $f$ is called a \emph{map of rational functions}. Note that if the $f_i$ are not polynomial, the map $f$ is not well defined everywhere in the domain and we use the notation $f: \Cc^n \ratto \Cc^m$ to allow for this possibility.

The goal of this paper is to compute the image of a map of rational functions. There is another case of interest however, which turns out to generalize the one above while providing a more advantageous perspective.

Consider a map $f: \p^n \ratto \p^{m}$ defined by $ [x] \mapsto [f_0(x),\dots,f_m(x)]$ where the $f_i$ are rational functions. This makes sense only when the $f_i$ are homogeneous of the same degree.

When each $f_i$ is a polynomial we may emphasize this fact by referring to $f$ as a \emph{polynomial map}. Note that even when $f$ is a polynomial map, $f$ need not be well-defined on the entire domain and we will use the notation $\p^n \ratto \p^{m} $ to highlight this fact.

\subsection{From affine rational to projective polynomial maps}\label{subsec:affrat}
Although one can extend a map of rational functions $f:\Cc^n \ratto \Cc^m$ to a polynomial map $\p^n \ratto \p^m$, the standard way to do this would change the image. The trick below allows one to perform this extension \emph{without changing the image}.

Let $\iota: \Cc^m \hookrightarrow \p^m$ be defined by $(x_1,\dots,x_m) \mapsto [1,x_1,\dots,x_m]$.
The composition $\iota \circ f$ extends to $\p^n \ratto \p^m$ as
\[
 [x_0,\dots,x_n] \mapsto [1,x_0^{-\deg f_1}f_1,\dots,x_0^{-\deg f_m}f_m].
\]
This map is undefined wherever the previous map was undefined and additionally at the hyperplane at infinity (unless the map is constant).
In particular, it has the same image.

Further, we can convert any rational map $f: \p^n \ratto \p^m$, defined by rational functions $f_i = \frac{g_i}{h_i}$, to a polynomial map without changing the image. Set $H := \prod_{i=0}^m h_i$ and define the polynomial map
\[f' = [H^2 f_0, H^2 f_1, \dots ,H^2 f_m].\]
The image of $f$ and $f'$ coincide, while $f'$ is polynomial.

For these reasons, in the rest of this paper we will concentrate on polynomial maps $\p^n \ratto \p^m$ and their restrictions to varieties $X$ in $\p^n$.
\enlargethispage{\baselineskip}

\section{Image of a variety}\label{sec:Im}

  In this section we describe our algorithm for computing the image of a polynomial map defined on a projective variety.  Let us emphasize that we work over the complex numbers and point to the references \cite{cox1992ideals, shafarevich1994basic} for the basic facts we will be using from algebraic geometry.

  \subsection*{Constructible sets}
  Our starting point is Chevalley's theorem on constructible sets.
  \begin{theorem}[Chevalley]
    Let $f: \mathbb{P}^n \dashrightarrow \mathbb{P}^k$ be a rational map and $V \subset \mathbb{P}^n$ a variety.  If $B$ is the base locus of $f$, then $f(V \backslash B)$ is a constructible set.
  \end{theorem}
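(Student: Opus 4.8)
The plan is to reduce the statement to the classical affine version of Chevalley's theorem, which asserts that the image of a morphism of Noetherian schemes (or, concretely, of a morphism of affine varieties over $\Cc$) of finite type is constructible. The essential point is that $f(V\setminus B)$ is genuinely the image of an actual \emph{morphism}, once we remove the base locus, and that the projective setting can be covered by finitely many affine pieces on which constructibility is preserved under finite unions.

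First I would observe that $f$ restricted to $\p^n\setminus B$ is a genuine morphism $\p^n\setminus B\to\p^k$: away from the base locus the homogeneous coordinates $[f_0(x),\dots,f_k(x)]$ are well-defined and not all zero, so $f$ is a regular map there. Intersecting with the (locally closed, hence quasi-projective) variety $V$, the restriction $g:=f|_{V\setminus B}$ is a morphism of quasi-projective varieties over $\Cc$. Next I would cover the source $V\setminus B$ by finitely many affine open sets $U_1,\dots,U_r$ (possible since $V\setminus B$ is quasi-projective, hence Noetherian and covered by finitely many affines) and likewise cover the target $\p^k$ by the standard affine charts $W_0,\dots,W_k$. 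Then $V\setminus B=\bigcup_{i}\bigcup_{j} (U_i\cap g^{-1}(W_j))$, each piece is again affine (being a principal open subset of an affine variety, as $g^{-1}(W_j)$ is defined by non-vanishing of one coordinate), and $g$ maps each such piece to the affine space $W_j\cong\Cc^k$ by a morphism of affine varieties. Applying the affine Chevalley theorem, the image of each piece is constructible in $W_j$, hence constructible in $\p^k$. Finally, $f(V\setminus B)=\bigcup_{i,j} g(U_i\cap g^{-1}(W_j))$ is a finite union of constructible sets, therefore constructible; here I use that the constructible subsets of a Noetherian topological space form a Boolean algebra, so finite unions of constructible sets are constructible.

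I expect the only real subtlety to be bookkeeping rather than mathematics: one must be careful that removing the base locus really does turn the rational map into an honest morphism (this is exactly where the hypothesis ``$B$ is the base locus'' is used), and that the affine pieces chosen in the source map into single affine charts of the target so that the classical affine statement applies verbatim. Everything else — the finiteness of the covers, stability of constructibility under finite unions and under the inclusion of an open subvariety into the ambient projective space — is formal. So strictly speaking this is not a new theorem but a packaging of the standard Chevalley theorem; I would present it as such, citing the affine version from a standard reference (e.g.\ \cite{shafarevich1994basic}) and spelling out only the reduction. The main ``obstacle,'' if any, is purely expository: making the passage from the rational-map formulation to the morphism formulation precise.
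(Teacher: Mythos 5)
Your argument is correct. Note, however, that the paper offers no proof of this statement at all: it is quoted as Chevalley's classical theorem and used as a black box, so there is nothing internal to compare your proof against. Your reduction — delete the base locus to obtain an honest morphism $g:V\setminus B\to\p^k$ of quasi-projective varieties, cover source and target by finitely many affine pieces $U_i\cap g^{-1}(W_j)$ (each affine, being a principal open subset of an affine variety), apply the affine Chevalley theorem to each piece, and conclude by stability of constructibility under finite unions — is the standard textbook derivation of the projective/rational-map formulation from the affine one, e.g.\ as in \cite{shafarevich1994basic}. The one point worth keeping explicit, as you do, is that ``base locus'' here means the common zero locus of the chosen representatives $f_0,\dots,f_k$ (matching the paper's later definition of the indeterminacy locus), so that regularity on the complement is immediate rather than an appeal to a possibly larger domain of definition.
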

  In other words, the image can be described by a finite sequence of algebraic sets $(Z_0,\dots,Z_k)$ such that:
    \begin{itemize}
      \item $Z_0 \supsetneq Z_1 \supsetneq \dots \supsetneq Z_k $,
      \item $\im(f) = Z_0 \setminus (Z_1 \setminus (Z_2 \setminus ( \dots (Z_{k-1}\setminus Z_k)\dots)))$.
    \end{itemize}
    The last of these conditions is often written in the form
    \[
      \im(f) = Z_0 - Z_1 + Z_2 - \dots + (-1)^k Z_k.
    \]
    Here we note that the subtraction and addition operations on sets do not commute.

    \begin{definition}\label{def:constr}
  For a constructible set $C$ a representation $C = V_0 - V_1 + \dots + (-1)^\ell V_\ell$ will be called \emph{canonical} if the following properties hold:
  \begin{align*}
    V_{2k} &= \overline{V_{2k-1} \cap C}, \\
    V_{2k+1} &= \overline{V_{2k} \setminus C},
  \end{align*}
  for every $k \ge 0$, where we define $V_{-1}$ to be the ambient space of $C$.
\end{definition}
  \subsection*{Presenting constructible sets as graphs.}
 Throughout, graphs are simple and connected.
  A graph $\cg$ with a distinguished vertex $\xr$ and no cycles is called a \emph{tree}. The vertex $\xr$ is called the \emph{root}. On each edge of $\cg$ we can choose an orientation so that the edge points away from $\xr$. With this orientation, we will view our trees as being directed graphs.

  Let $\ct$ be a tree. If $\xn \to \xn'$ is an edge of $\ct$, then $\xn'$ is called a \emph{child} of $\xn$, and $\xn$ is called the \emph{parent} of $\xn'$.
The vertices with no children are called \emph{leaves}. The root is the only vertex with no parent.
A tree has a natural grading, called $\depth$, given by the path length from the root.

We can represent an irreducible constructible set $C$ as a tree with vertices labeled by varieties. The construction is inductive. If $C$ is closed, we represent it as a tree with a single vertex $\xr$ labeled by $C$. Otherwise, let $V_i$ be the irreducible components of $\overline{C}\setminus C$ and by induction let $\ct_i$ be the tree representation of the constructible set $V_i$. The tree $\ct$ corresponding to $C$ is then constructed as follows: we label the root $\xr$ of $\ct$ by the closure $\overline{C}$ and then attach the root of each $\ct_i$ to $\xr$.

\begin{definition}
  Any labeled tree obtained from a constructible set by the construction above will be called a \emph{constructible tree}.
\end{definition}

From the tree $\ct$ we can recover the corresponding constructible set $C(\ct)$ inductively as follows:
\[
C(\ct) =  V(\xr) \; \backslash \bigcup_{\xn \in \text{children}(\xr)} C(\ct_\xn) ,
\]
where $\ct_\xn \subset \ct$ is the subtree with root $\xn$.
\enlargethispage{\baselineskip}
\begin{lemma}
Let $\ct$ be a constructible tree with two vertices $\xn,\xn' \in \ct$. If $V(\xn) = V(\xn')$, then $\depth(\xn) \equiv \depth(\xn') \Mod{2}$.
\end{lemma}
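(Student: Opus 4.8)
The plan is to deduce the lemma from a sharper claim: for every vertex $\xn$ of $\ct$, a general point of the variety $V(\xn)$ lies in the constructible set $C(\ct)$ if and only if $\depth(\xn)$ is even. Granting this, when $V(\xn)=V(\xn')$ the two instances speak about a general point of the \emph{same} variety, so $\depth(\xn)$ is even exactly when $\depth(\xn')$ is, i.e.\ $\depth(\xn)\equiv\depth(\xn')\Mod 2$. Here, for a constructible set $E$ and an irreducible variety $W$, ``a general point of $W$ lies in $E$'' means that $E\cap W$ is dense in $W$; since $E\cap W$ is constructible, exactly one of ``$E\cap W$ dense in $W$'' and ``$W\setminus E$ dense in $W$'' holds, and this exclusive dichotomy is what the whole argument rests on.

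To make the induction work I would prove the relative statement: for any vertex $\xa$ and any vertex $\xn$ of the subtree $\ct_\xa$, a general point of $V(\xn)$ lies in $C(\ct_\xa)$ if and only if $\depth(\xn)-\depth(\xa)$ is even; the sharper claim is the case $\xa=\xr$. Induct on $\depth(\xn)-\depth(\xa)\ge 0$. Base case $\xn=\xa$: here $V(\xa)=\overline{C(\ct_\xa)}$, and each child of $\xa$ carries a label that is a proper closed subset of $V(\xa)$ (indeed $C(\ct_\xa)$, being constructible, contains a dense open subset of its closure, so $\overline{C(\ct_\xa)}\setminus C(\ct_\xa)$ and hence every child label lie in a proper closed subset of $V(\xa)$); thus $C(\ct_\xa)$ contains a dense open subset of $V(\xa)=V(\xn)$, matching the parity of $0$. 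Inductive step: set $W:=V(\xn)$ and let $\xc$ be the child of $\xa$ through which $\xn$ descends. Since labels weakly decrease along edges, $W\subseteq V(\xc)\subseteq V(\xa)=\overline{C(\ct_\xa)}$. The crucial point is that $C(\ct_\xc)$ is an irreducible component of $\overline{C(\ct_\xa)}\setminus C(\ct_\xa)$, so $C(\ct_\xc)=V(\xc)\cap\bigl(\overline{C(\ct_\xa)}\setminus C(\ct_\xa)\bigr)$ by the standard fact that a component $\Gamma$ of a constructible set $S$ equals $\overline{\Gamma}\cap S$. Intersecting with $W$ and using $W\subseteq V(\xc)\cap\overline{C(\ct_\xa)}$ yields $C(\ct_\xc)\cap W=W\setminus C(\ct_\xa)$. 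Therefore a general point of $W$ lies in $C(\ct_\xc)$ if and only if it does \emph{not} lie in $C(\ct_\xa)$; applying the inductive hypothesis to the pair $(\xc,\xn)$, for which $\depth(\xn)-\depth(\xc)=\depth(\xn)-\depth(\xa)-1$, converts the left side into the desired parity statement for $\depth(\xn)-\depth(\xa)$.

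The facts I would record at the start are: $V(\xn)=\overline{C(\ct_\xn)}$ for every vertex (immediate from the construction); labels weakly decrease along edges; the identity $\Gamma=\overline{\Gamma}\cap S$ for an irreducible component $\Gamma$ of a constructible set $S$ (so that $\overline{\Gamma}$ is a component of $\overline{S}$); and that a constructible subset of an irreducible variety $W$ either contains a dense open subset of $W$ or lies in a proper closed one. These are all routine. The only step calling for care is that the relation $C(\ct_\xc)\cap W=W\setminus C(\ct_\xa)$ gives a genuine equivalence (a general point of $W$ lies in $C(\ct_\xc)$ precisely when it fails to lie in $C(\ct_\xa)$), not merely a one-way implication --- which is exactly the exclusive dichotomy above applied to the constructible set $C(\ct_\xa)\cap W$ inside the irreducible $W$ --- together with keeping the parity bookkeeping straight, distance zero being even.
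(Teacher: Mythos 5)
Your proposal is correct and follows the same route as the paper: the paper's entire proof is the one-line assertion that the parity of depth determines whether the generic point of $V(\xn)=V(\xn')$ lies in $C(\ct)$, which is precisely the sharper claim you establish by induction. You have simply supplied the details the paper leaves implicit, and your bookkeeping (the identity $C(\ct_\xc)\cap W = W\setminus C(\ct_\xa)$ and the generic-point dichotomy) checks out.
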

\begin{proof}
The parity of depth determines whether or not the generic point of $V(\xn)=V(\xn')$ is contained in $C(\ct)$.
\end{proof}
Therefore, to obtain a \emph{constructible graph} from a constructible tree, we may identify vertices having the same label.

\begin{figure}[h!]
\begin{subfigure}{.5\textwidth}
  \centering
  \includegraphics{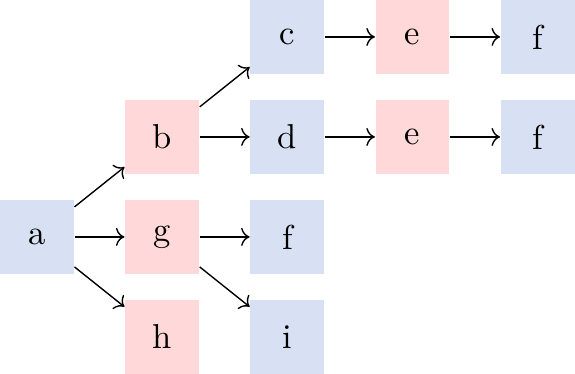}
  \caption{Constructible tree}
\end{subfigure}%
\begin{subfigure}{.5\textwidth}
  \centering
  \includegraphics{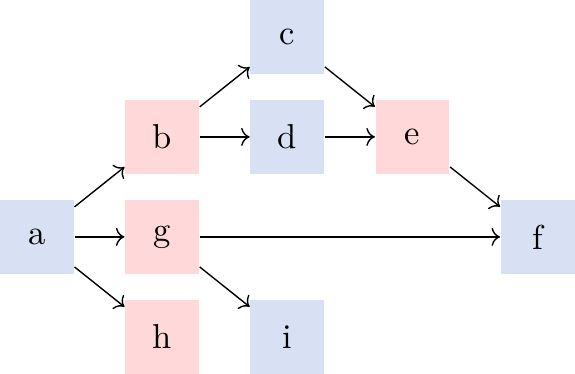}
  \caption{Constructible graph}
\end{subfigure}
\caption{A constructible tree and the resulting constructible graph}
\label{fig:constrGraphExample}
\end{figure}

  \begin{remark}
For our main algorithm, it seems more natural to define and use constructible trees. However, constructible graphs are a more compact representation of a constructible set.
An implementation utilizing the graph structure would like yield savings in time and memory (cf.~\cite{joswig,ganter}).
\end{remark}
\begin{remark}
If $f: \a^n \to \a^m$ is given by monomials, the constructible graph representing $\im(f)$ is a subposet of the face lattice of the Newton polytope of $f$, see \cite{geiger2006toric}.

In general, if $f$ is a toric map,
then $\im(f)$ is a subposet of the face lattice of the polytope of characters defining $f$.
\end{remark}

\subsection{An algorithm for computing images}  \label{sec:algorithm}

Let $X \subset \p^n$ be a variety and $f: X \ratto \p^m$ be a polynomial map $x \mapsto [f_0(x):\dots:f_m(x)]$ in the coordinates of $\p^n$.

\begin{definition}
  The \emph{indeterminacy locus} of $f$ is the subscheme $B$ of $X$ cut out by the ideal $(f_0,\dots,f_m) + I_X$.
  The \emph{image of $f$}, denoted $\im(f)$, is the set $f(X\setminus B)$ where $B$ is the indeterminacy locus of $f$.
  The \emph{image closure of $f$}, denoted $\zim(f)$, is the Zariski closure of the image of $f$.
\end{definition}

\begin{definition}\label{def:exceptional_image}
  An algebraic set $A \subsetneq \zim(f)$ containing the difference $\zim(f) \setminus \im(f)$ will be called an \emph{(image) frame} of $f$, since it covers the boundary of the image.
\end{definition}

  We start by describing a subroutine \tt{Frame} which computes an image frame of $f: X \ratto \p^m$.

The idea is to resolve the map $f$ by the blowup $\blow_f X$ of $X$ along the indeterminacy locus $B$ and compute the image of the exceptional divisor.
\[
  \begin{tikzcd}[every node/.style={fill=orange}]
    \blow_f X \arrow[d,"\pi"'] \arrow[rd,"\tilde f"] & \\
    X \arrow[r,dashed, "f"']                         & \p^m
  \end{tikzcd}
\]
However, if $\dim X$ is strictly greater than the dimension of the image, the images of the exceptional divisors may dominate the image of $f$. To resolve this issue we will cut down the dimension of $X$ by taking an appropriate linear section of $X$.

\begin{definition}
  Let $\Lambda \subset \p^n$ be a linear space and $\delta = \dim X - \dim X\cap \Lambda$. Then $X \cap \Lambda$ is a \emph{codimension $\delta$ linear section of $X$}. If $\zim(f) = \zim(f|_{X\cap \Lambda})$ then the linear section $X \cap \Lambda$ will be called \emph{generic}.
\end{definition}

  Let $\delta := \dim X - \dim \im(f)$ and pick a generic codimension $\delta$ linear section $X':=X\cap \Lambda$ of $X$. Blowing up $X'$ along the indeterminacy locus of $f':=f|_{X'}$ gives a resolution $\tilde f' : \blow_{f'} X' \to \p^m$.  Computing the images of the exceptional divisors via $\tilde f'$ gives a frame of $f'$. This in turn is a frame for $f$. All these statements will be proved in Lemma \ref{lem:approximate}.

\begin{algorithm}
\caption{Frame}\label{alg:partial}
\begin{algorithmic}[1]
\Procedure{Frame}{$f:X \ratto \p^m$}
\State $\delta \gets \dim X - \dim \im(f)$
\If{$\delta > 0$}
	\State $X \gets$ generic codimension  $\delta$ linear section of $X$
    \State $f \gets f|_X$
\EndIf
\State $E \gets$ exceptional divisor of $\blow_f X$
\State $\tilde f \gets $ the resolution $(\blow_f X \to \p^m)$ of $f$
\State \textbf{return} irreducible components of $\tilde{f}(E)$
\EndProcedure
\end{algorithmic}
\end{algorithm}

\begin{lemma}\label{lem:approximate}
Let $X \subset \p^n$ be irreducible.
  Then \tt{Frame}$(f\colon X \ratto \p^m)$ returns the irreducible components of a frame $A$ of $f$.
\end{lemma}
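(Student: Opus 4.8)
The plan is to verify the two claims packaged into the statement: first, that after the (possible) linear section, the irreducible components of $\tilde f(E)$ form a frame for the restricted map $f|_{X'}$; and second, that a frame for $f|_{X'}$ is also a frame for $f$ itself. I would begin with the easier reduction. Set $\delta = \dim X - \dim\im(f)$ and let $X' = X\cap\Lambda$ be a generic codimension $\delta$ linear section, so by definition $\zim(f) = \zim(f|_{X'})$. The key point is that for a \emph{generic} $\Lambda$, not only are the closures equal, but the \emph{boundary} can only shrink: $\zim(f|_{X'})\setminus\im(f|_{X'}) \subseteq \zim(f)\setminus\im(f)$, up to a set that is already lower-dimensional and hence contained in any frame. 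More precisely, a generic point of $\zim(f)$ that lies in $\im(f)$ is hit by a point of $X$ lying on a positive-dimensional fiber; intersecting with a generic $\Lambda$ of complementary codimension keeps a point of that fiber, so generic points of the image stay in the image of the restriction. Thus any algebraic set $A\subsetneq\zim(f)$ containing $\zim(f|_{X'})\setminus\im(f|_{X'})$ also contains $\zim(f)\setminus\im(f)$, i.e.\ is a frame for $f$. (One must also note $\dim\im(f|_{X'}) = \dim\im(f)$ and that $A\neq\zim(f)$, which holds because $\zim(f)\setminus\im(f)$ is a proper closed subset — itself a consequence of Chevalley.)

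Next I would establish that $\tilde f(E)$ — more precisely the union of the images of the irreducible components of the exceptional divisor $E\subset\blow_{f'}X'$ — is a frame for $f' := f|_{X'}$. Here $\dim X' = \dim\im(f')$, so $f'$ is generically finite onto its image. The blowup $\pi:\blow_{f'}X'\to X'$ along the indeterminacy locus $B'$ resolves $f'$: the induced morphism $\tilde f':\blow_{f'}X'\to\p^m$ is defined everywhere, and away from $E$ it agrees with $f'\circ\pi$. Since $\blow_{f'}X'\setminus E \cong X'\setminus B'$, we get $\tilde f'(\blow_{f'}X') = \im(f') \cup \tilde f'(E)$, hence $\zim(f') = \zim(f') \cup \overline{\tilde f'(E)}$ and in particular $\zim(f')\setminus\im(f') \subseteq \tilde f'(E)$. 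It remains to check that $\overline{\tilde f'(E)} \subsetneq \zim(f')$ is a \emph{proper} subset, so that $\tilde f'(E)$ genuinely qualifies as a frame. This is where genericity of the finite map is used: because $f'$ is generically finite, the exceptional divisor $E$ has dimension $\dim X' - 1 < \dim X'$ on every component lying over a point of $B'$, and its image under the morphism $\tilde f'$ therefore has dimension strictly less than $\dim\zim(f')$; so the closure of $\tilde f'(E)$ is a proper closed subvariety. Finally, $\tilde f'(E)$ is a constructible set whose closure is a proper subvariety, so its Zariski closure is a finite union of irreducible varieties — these are the "irreducible components of $\tilde f(E)$" returned by the algorithm — and this union still contains $\zim(f')\setminus\im(f')$, hence is a frame.

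Combining the two parts: the algorithm returns the irreducible components of $\overline{\tilde f'(E)}$, which is a frame $A$ for $f'$; by the first paragraph $A$ is then a frame for $f$; and $A$ is presented exactly as the list of its irreducible components. I would close by remarking that $X$ irreducible is used to guarantee $\dim\im(f)$ is well defined and that "generic linear section" behaves as expected (irreducibility of $X'$ for $\delta$ small follows from Bertini-type arguments, though strictly we only need the dimension and image-closure statements).

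The main obstacle I anticipate is the first paragraph — controlling how the \emph{non-closed part} of the image behaves under a generic linear section. Equality of image closures is the defining property of "generic," but one has to argue that cutting by $\Lambda$ does not accidentally \emph{create} new boundary points (points that were in $\im(f)$ but fall out of $\im(f|_{X'})$) beyond a lower-dimensional set already absorbed into the frame. The cleanest way is a dimension count on fibers: the locus in $\zim(f)$ over which the fiber of $f$ has dimension $\geq k$ is constructible of dimension $\leq \dim X - k$; a generic $\Lambda$ of codimension $\delta = \dim X - \dim\im(f)$ meets each positive-dimensional fiber over a generic image point, preserving surjectivity onto a dense subset of $\zim(f)$ \emph{inside the image}. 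Making this precise without drowning in fiber-dimension bookkeeping is the delicate step.
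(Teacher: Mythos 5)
The second half of your argument --- resolving $f' = f|_{X'}$ by the blowup, observing that $\tilde f'$ surjects onto $\zim(f')$, that every point of $\zim(f')\setminus\im(f')$ must come from $E$, and that $\dim \tilde f'(E) \le \dim E < \dim X' = \dim\zim(f')$ forces proper containment --- is correct and is exactly the paper's argument. The problem is your first paragraph, the reduction from $f$ to $f'$. You assert the containment $\zim(f|_{X'})\setminus\im(f|_{X'}) \subseteq \zim(f)\setminus\im(f)$ (``the boundary can only shrink'', up to lower-dimensional error), and then conclude that any $A$ containing the left-hand side contains the right-hand side. That conclusion does not follow from that containment --- it would follow from its \emph{reverse}. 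And the reverse is what is actually true, and it is trivial: since $X'\subseteq X$ we have $\im(f')\subseteq\im(f)$, while genericity gives $\zim(f')=\zim(f)$; hence any $y\in\zim(f)\setminus\im(f)$ lies in $\zim(f')$ and not in $\im(f')$, i.e.\ $\zim(f)\setminus\im(f)\subseteq\zim(f')\setminus\im(f')$. So a frame of $f'$ is automatically a frame of $f$, with no fiber-dimension analysis needed.

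Consequently, the issue you flag at the end as ``the delicate step'' --- that sectioning by $\Lambda$ may create new boundary points, i.e.\ points of $\im(f)$ that fall out of $\im(f|_{X'})$ --- is a red herring for this lemma. Such points only make $\zim(f')\setminus\im(f')$ \emph{larger}, hence only enlarge the frame; a frame is not required to be tight, only to be a proper algebraic subset of $\zim(f)$ containing the true boundary, and properness is already secured by the dimension count on $E$. Your proposed fix via fibers of dimension $\ge k$ is not needed and, as you yourself note, is not actually carried out. Replace the first paragraph by the one-line inclusion above and the proof is complete (modulo the minor point that $\dim E = \dim X' - 1$ because $E$ is a divisor on the blowup, not because $f'$ is generically finite; generic finiteness is used only to compare $\dim E$ with $\dim\zim(f')$).
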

\begin{proof}
  By taking a generic linear section $X'$ of $X$, we make sure $\dim X' = \dim \im(f)$ and $f' := f|_{X'}$ has image closure equal to $\zim(f)$. Then the exceptional divisor $E$ of the blow-up of $X'$ has dimension strictly less than $\zim(f)$. Therefore the image $A$ of $E$ will be \emph{strictly} contained in $\zim(f)$.

  On the other hand, $\im(f') \subset \im(f)$, and $\zim(f')=\zim(f)$.  Therefore  $\zim(f') \setminus \im(f') \supset \zim(f) \setminus \im(f)$ and we need only show the containment $A \supset \zim(f') \setminus \im(f')$.

  The blowup $\blow_{f'}X' \overset{\pi}{\to} X'$ gives a resolution
$\tilde f' : \blow_{f'}X' \to \p^m$ of $f'$. Note that $\im(\tilde{f'}) = \zim(f')$. In particular, for any point $y \in \zim(f') \setminus \im(f')$  we can find a point $x \in \tilde X'$ satisfying $\tilde f'(x)=y$. Then $\pi(x)$ must be in the indeterminacy locus of $f'$. Therefore, $x \in E$ and $y \in A$.
\end{proof}

The idea behind the main algorithm \tt{TotalImage} is to compute successively finer approximations of the image boundary $\zim(f) \setminus \im(f)$. We now give an informal demonstration of how these approximations can be obtained.

Let $Y_0 = \zim(f)$ and $A_0 \supset Y_0 \setminus \im(f)$ be a frame of $f$. Then $Y_0 - A_0 \subset \im(f) \subset Y_0$.  We improve this approximation as follows. Define $X_1 := f\inv(A_0) \subset X$ to be the preimage of $A_0$ and let $Y_1 = \zim(f|_{X_1})$. Note that the image of $f|_{X_1}$ is precisely $A_0 \cap \im(f) \subset Y_1$. In particular,
\begin{align}\label{eq:approx}
  \im(f) &= Y_0  - A_0 + \im(f)\cap A_0 =Y_0  - A_0 + \im(f|_{X_1}).
\end{align}
Let $A_1$ be a frame of $f|_{X_1}$.  This time we have $Y_1 - A_1 \subset \im(f|_{X_1}) \subset Y_1$.
Combining this with Equation (\ref{eq:approx}) gives us
\[
  Y_0 - A_0 + Y_1 - A_1 \subset \im(f) \subset Y_0-A_0+Y_1.
\]
The frames are meant to get strictly smaller in dimension. Therefore, after at most $N := \dim Y_0$ iterations the frame $A_N$ should be empty. This gives
  \[
    Y_0 - A_0 + \dots + Y_{N} - A_N \subset \im(f) \subset Y_0 - A_0 + \dots + Y_{N},
  \]
  which expresses $\im(f)$ exactly when $A_N= \emptyset$.

Note that our algorithm for computing frames uses the irreducibility of the domain in a crucial way. This means that we need to decompose each pullback of a frame into its irreducible components. This causes the algorithm to branch at each step making the construction above harder to visualize. Furthermore, the output will be in the form of a constructible tree. Nevertheless, the nature of the argument remains the same and we prove in Theorem \ref{thm:main} that the resulting constructible tree represents $\im(f)$.

In preparation for the main algorithm, we introduce the following notions:
\begin{enumerate}
  \item Assigning varieties to vertices in the algorithm means creating vertices labeled by these varieties.
  \item Assigning a set of varieties $\{V_1,\dots,V_k\}$ to children($\xn$) means creating $k$ children of $\xn$ which are labeled with $V_1,\dots,V_k$.
  \item A vertex is called \emph{unprocessed} if its label is a subvariety of the \emph{domain} of $f$.
\end{enumerate}
\enlargethispage{\baselineskip}

We are ready to present the main algorithm of this paper, which computes the image of a polynomial map (as we prove in Theorem \ref{thm:main}).

\begin{algorithm}[H]
\label{alg:main}
\caption{TotalImage}
\begin{algorithmic}[1]
\Procedure{TotalImage}{$f:X \ratto \p^m$}
\State $\ct \gets$ the \emph{unprocessed} root vertex $\xr$ labeled by $X$
\ForAll{\emph{unprocessed} vertices $\xn \in \ct$}
  \State children($\xn$) $\gets$ \tt{Frame}$(f|_{V(\xn)})$
  \ForAll{children $\xc$ of $\xn$}
    \State children($\xc$) $\gets$ irreducible components of $\overline{f^{-1}(V(\xc)) \backslash \bs{f}}$
  \EndFor
  \State $V(\xn) \gets \zim(f|_{V(\xn)})$
\EndFor
\State Remove duplicates as in Section \ref{sec:treecleaners} and \textbf{return} $\ct$.
\EndProcedure
\end{algorithmic} %
\end{algorithm} %
 Below, we give a graphical representation of the main loop (line 3) of \texttt{TotalImage}.  The unprocessed nodes are highlighted ($W_1,W_2 \subset X$).
\begin{figure}[h!]
\centering
\includegraphics{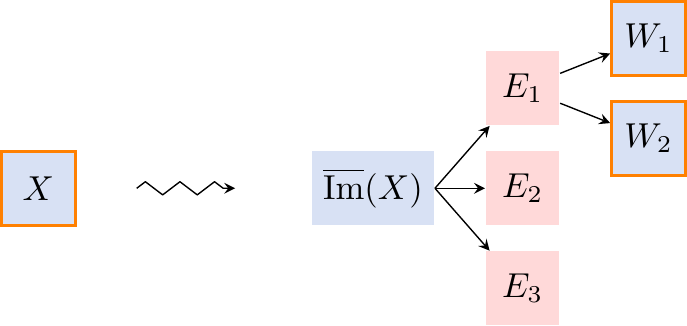} %
\end{figure} %

\subsubsection{Cleaning the tree}
\label{sec:treecleaners}
The tree we construct at the end of the loop in \tt{TotalImage} may contain edges of the form $\xn \to \xn'$ where $V(\xn)=V(\xn')$. This happens when a component of a frame is dominated by the image. Then we can delete $V(\xn)$ as well as all of its descendents and add the descendents of $V(\xn')$ to the parent of $V(\xn)$ (cf. Figure \ref{fig:cleaningChildren}).
\begin{figure}[h!]
\centering
\begin{subfigure}{.5\textwidth}
  \centering
  \includegraphics{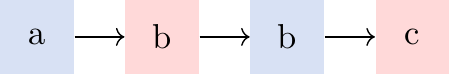} %
\end{subfigure}%
\begin{subfigure}{.5\textwidth}
  \centering
  \includegraphics{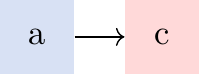} %
\end{subfigure} %
\caption{The tree $\ct$ before and after cleaning} %
\label{fig:cleaningChildren} %
\end{figure} %

  There is another instance of redundancy. It may be that $V(\xn)$ has two children $V(\xn_1')$ and $V(\xn_2')$ with $V(\xn_1') \subset V(\xn_2')$. It is then unnecessary to keep both of these branches of the tree. We will remove $V(\xn_1')$ and all its descendants (cf. Figure \ref{fig:cleaningSiblings}).
\begin{figure}[h!]
\centering
\begin{subfigure}{.5\textwidth}
  \centering
  \includegraphics{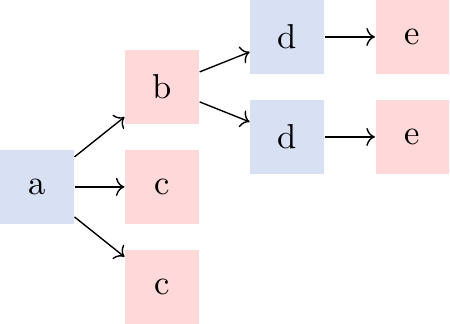} %
\end{subfigure}%
\begin{subfigure}{.5\textwidth}
  \centering
  \includegraphics{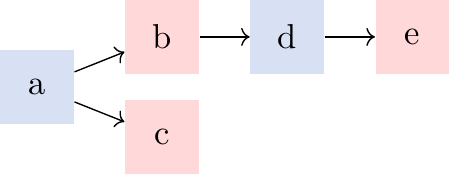} %
\end{subfigure} %
\caption{The tree $\ct$ before and after cleaning} %
\label{fig:cleaningSiblings} %
\end{figure} %

\subsubsection{Justification of the algorithm} Let $f \colon X \ratto \p^m$ be a polynomial map. We start by proving a lemma that our algorithm correctly describes the image.
\begin{lemma}\label{lem:descriptionofimage}
Before we clean the tree, for any point $p$ in (resp.~not in) the image, any longest path starting at the root and going through vertices labeled by varieties containing $p$ goes through an odd (resp.~even) number of vertices.
\end{lemma}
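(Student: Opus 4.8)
The plan is to prove the statement by induction on the depth of the tree, closely following the informal derivation preceding the statement but keeping careful track of the branching introduced by decomposing frame-preimages into irreducible components. Fix a point $p \in \p^m$. Writing $\ct$ for the tree constructed at the end of the main loop of \tt{TotalImage}, I want to show: if $p \in \im(f)$ then every longest path from the root through vertices whose labels contain $p$ has odd length (in vertices), and if $p \notin \im(f)$ every such path has even length. The base case is when $p \notin \zim(f) = V(\xr)$: then there is no path at all through vertices containing $p$ except the empty one, but actually the statement is about paths starting at the root, so if $p\notin V(\xr)$ the only "path through vertices containing $p$" is vacuous and there is nothing to prove; the first interesting case is $p \in \zim(f)$.

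**The inductive step** is where the real work lies. Suppose $p \in V(\xr) = \zim(f)$. By Lemma \ref{lem:approximate}, the children of $\xr$ are the irreducible components of a frame $A$ of $f$, which covers $\zim(f)\setminus \im(f)$. I would split into two cases. If $p \notin A$: then $p \in \im(f)$, and no child of $\xr$ has a label containing $p$, so the longest path through vertices containing $p$ is just $\xr$ itself, length $1$ — odd, as required. If $p \in A$: then $p$ lies in some irreducible component $V(\xc)$ of $A$, and $p \in \im(f)$ if and only if $p$ is in the image of $f|_{f^{-1}(A)}$, equivalently (component by component) $p \in \im(f|_{V(\xc')})$ for some child $\xc'$ of some such $\xc$; here the grandchildren of $\xr$ are the irreducible components of $\overline{f^{-1}(V(\xc))\setminus \bs{f}}$, and by Equation (\ref{eq:approx}) applied to each branch the image of $f$ restricted to these preimages recovers $\im(f)\cap V(\xc)$. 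The point is: a longest path through $p$ continues from $\xr$ into some child $\xc$ with $p \in V(\xc)$, then into some grandchild $\xc'$ with $p \in V(\xc')$ (a subvariety of the domain), and the subtree rooted at such a grandchild is itself (the output of) a \tt{TotalImage}-style recursion on $f|_{V(\xc')}$. By the induction hypothesis applied to that subtree, the longest sub-path through vertices containing $p$ has odd length iff $p \in \im(f|_{V(\xc')})$. Prepending the two vertices $\xr, \xc$ flips the parity twice — i.e. preserves it — so the full path has odd length iff $p \in \im(f|_{V(\xc')})$ for the branch realizing the longest path; and one must check that taking the \emph{longest} such path correctly detects membership, i.e. that $p \in \im(f)$ iff \emph{some} branch witnesses it, and that no spurious longer even path through $p$ exists when $p \in \im(f)$.

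**The main obstacle**, and the step I would spend the most care on, is precisely this last point: reconciling the word "longest" with the logical structure "$p \in \im(f)$ iff $p$ escapes \emph{some} branch of the frame-subtraction". Concretely, if $p \in \im(f) \cap A$, there might simultaneously be a branch in which $p$ is eventually recaptured as an image point (giving an odd path) and another, longer branch in which — no: one has to argue this cannot give a longer \emph{even} path. The resolution is that along any single root-to-leaf chain of labels $V(\xr) \supsetneq V(\xc) \supsetneq V(\xc') \supsetneq \cdots$ containing $p$, the parity is governed entirely by whether $p$ lies in the image of the restriction of $f$ to the last label before $p$ "drops out", and the alternating frame/preimage structure means the final answer for that chain is determined by the deepest label in the chain containing $p$ — so a longer chain simply gives more refined information, and the induction hypothesis guarantees each maximal chain reports the same bit $[\,p \in \im(f)\,]$. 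I would make this precise by proving the slightly stronger statement: for every vertex $\xn$ with $p \in V(\xn)$, every longest path from $\xn$ downward through labels containing $p$ has length of parity matching $[\,p \in \im(f|_{V(\xn)})\,]$ when $\xn$ is a "domain" (odd-depth-from-its-frame-parent) vertex, and the complementary parity when $\xn$ is a frame vertex; the global statement is the case $\xn = \xr$. The branching is handled by noting that distinct children correspond to distinct irreducible components, $p$ may lie in several, and the induction hypothesis applies uniformly to each, so "longest path" just selects the deepest chain — and deeper chains never change the reported parity, only confirm it.
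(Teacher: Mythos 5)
Your strategy is the same as the paper's: induct on the recursive structure (the paper inducts on $\dim\im f$, you on the depth of the tree, which comes to the same thing since frames strictly drop dimension), handle a point $p$ in the root label by asking whether it lies in the frame, and if so descend into the components of the frame's preimage and invoke the inductive hypothesis there. You have also correctly isolated the one genuinely delicate point, which the paper's four-line proof passes over in silence: when $p$ lies in a frame component $Z$, the set $\overline{f^{-1}(Z)\setminus \bs{f}}$ may have several irreducible components $W_1,\dots,W_s$, the inductive hypothesis controls each subtree only relative to its own restricted map $f|_{W_j}$, and the bits $[\,p\in\im(f|_{W_j})\,]$ genuinely depend on $j$. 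Hence distinct maximal chains through $p$ can report distinct parities, and one must show that the \emph{longest} chain reports the correct one.

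The gap is that your resolution of this point is an assertion, not an argument. The sentence ``the induction hypothesis guarantees each maximal chain reports the same bit $[\,p\in\im(f)\,]$'' is not what the induction hypothesis gives: it gives the bit $[\,p\in\im(f|_{W_j})\,]$ for the chain's own branch, and this need not equal $[\,p\in\im(f)\,]$. Likewise ``deeper chains never change the reported parity, only confirm it'' is precisely the claim that needs proof. Concretely, nothing in your argument (nor in the paper's) excludes the following configuration: $p\in\im(f|_{W_1})$ with the longest $p$-chain in $W_1$'s subtree of length $1$ (say $\im(f|_{W_1})=Z$ is closed and $p$ misses the frame of $f|_{W_1}$), while simultaneously $p\in\zim(f|_{W_2})\setminus\im(f|_{W_2})$ with the longest $p$-chain in $W_2$'s subtree of length $2$ (say $p$ lies in a frame component $H$ of $f|_{W_2}$ whose children's labels miss $p$). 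Then $p\in\im(f)$ but the longest $p$-path in the whole tree has $4$ vertices. Closing the gap requires an actual comparison of chain lengths across the branches $W_j$ --- for instance by relating the $p$-depth of each subtree to the canonical representation of $\im(f)\cap Z=\bigcup_j\im(f|_{W_j})$ --- or else a weakening of the statement (``some longest path,'' or restricting to generic points of the labels) matching how the lemma is actually used in the proof of Theorem \ref{thm:main}. To be fair, the paper's own proof supplies no more detail than yours here; but as written your proof does not establish the statement.
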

\begin{proof}
The proof is inductive on $\dim\im f$, the case $\dim\im f=0$ being trivial. If $p\not \in \zim f$ we are done, so assume $p\in \zim f$. If $p$ is not in the image of the exceptional divisor then $p\in\im f$ and the claim is true. Otherwise, $p$ belongs to a component $Z$ of the image of the exceptional  divisor. Our algorithm will work on components of $f^{-1}(Z)$  and we conclude by induction. 
\end{proof}
We note that Lemma \ref{lem:descriptionofimage} remains true also after cleaning the tree.
\begin{theorem}\label{thm:main}
  The algorithm \tt{TotalImage} terminates and outputs a constructible tree for the canonical representation of $\im(f)$.
\end{theorem}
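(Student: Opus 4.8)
The plan is to prove three things in sequence: (i) termination, (ii) that after the main loop the tree $\ct$ is a correct constructible tree for $\im(f)$, and (iii) that after cleaning it becomes the \emph{canonical} representation. For termination, I would argue that every application of \tt{Frame} in line 4 produces children whose labels have strictly smaller dimension than $V(\xn)$, by Lemma \ref{lem:approximate} (a frame is strictly contained in $\zim(f|_{V(\xn)})$, hence of strictly smaller dimension since $V(\xn)$ is irreducible and so is its image closure). The grandchildren created in line 6 are pullbacks of these frames, subvarieties of $X$, and the unprocessed vertices at depth $2k$ have image closure of dimension at most $\dim\im f - k$; once this hits $0$ the frame is empty and no new vertices are created. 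Since at each depth only finitely many irreducible components appear, the tree is finite and the \textbf{ForAll} loop halts.

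For correctness of the image description, I would invoke Lemma \ref{lem:descriptionofimage} directly: it already establishes that before cleaning, a point $p$ lies in $\im(f)$ if and only if every longest path through vertices whose labels contain $p$ has odd length. I then need to check that this is exactly the condition ``$p \in C(\ct)$'' for the constructible set $C(\ct)$ recovered from the tree by the inductive formula $C(\ct) = V(\xr) \setminus \bigcup_{\xn \in \mathrm{children}(\xr)} C(\ct_\xn)$. This is a straightforward parity induction on depth: $p \in C(\ct_\xn)$ iff $p \in V(\xn)$ and $p$ is in none of the $C(\ct_\xc)$ for children $\xc$; unwinding, membership alternates with depth exactly along the longest containing path, matching Lemma \ref{lem:descriptionofimage}. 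I would also note that the processing step $V(\xn) \gets \zim(f|_{V(\xn)})$ in line 8 is what makes the root of each subtree be labeled by the closure of the constructible set it represents, and that the two cleaning operations in Section \ref{sec:treecleaners} do not change $C(\ct)$ — one removes an edge $\xn \to \xn'$ with $V(\xn) = V(\xn')$ (a $+Z - Z$ cancellation, harmless since the descendants of $\xn'$ get reattached), the other removes a child $\xn_1'$ contained in a sibling $\xn_2'$ (redundant since $C(\ct_{\xn_1'}) \subset V(\xn_1') \subset V(\xn_2')$ is already carved out). The remark after Lemma \ref{lem:descriptionofimage} guarantees the parity statement survives cleaning.

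For the canonical form, I would verify Definition \ref{def:constr} by matching it against the construction of a constructible tree described just before Lemma 3.3: the root is labeled $\overline{C} = \zim(f)$, i.e. $V_0 = \zim(f)$; the depth-$1$ vertices are labeled by the irreducible components of $\zim(f) \setminus \im(f)$, so their union is $\overline{\zim(f)\setminus \im(f)} = \overline{V_0 \setminus C}= V_1$; and inductively, the children of a depth-$2k$ vertex are the components of the image boundary of $f$ restricted to that stratum, whose union is $\overline{V_{2k} \setminus C}$, while the depth-$(2k+1)$ processing step relabels a boundary component $W$ by $\zim(f|_{f^{-1}(W)}) = \overline{W \cap C}$. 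Here I must check the key geometric identity that the closure of $W \cap \im(f)$ equals $\zim(f|_{f^{-1}(W)})$, which holds because $f^{-1}(W) \setminus B$ maps onto $W \cap \im(f)$. After identifying the $V_i$ this way, taking the union over each depth level reproduces exactly the alternating sequence $V_0 \supsetneq V_1 \supsetneq \cdots$ of Definition \ref{def:constr}, and strict containment follows from the dimension drop used in the termination argument together with the fact that $\overline{V_{2k-1} \cap C} \subsetneq V_{2k-1}$ unless $V_{2k-1}$ is already in the image (which the cleaning step removes).

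The main obstacle I anticipate is not any single step but the bookkeeping of the correspondence between the recursive tree-to-set map $C(\ct)$, the parity statement of Lemma \ref{lem:descriptionofimage}, and Definition \ref{def:constr}'s level-by-level description — in particular making precise that ``union of labels at depth $i$'' equals $V_i$ requires knowing that labels at a fixed depth that arise from different branches still assemble into the canonical $V_i$, i.e. that the algorithm does not miss any component of a boundary stratum and does not introduce spurious ones. This is exactly what Lemma \ref{lem:approximate} (frame is an honest frame: it contains $\zim \setminus \im$) plus the decomposition into irreducible components in line 6 buy us, but spelling out that the \emph{closures} match, rather than just the constructible sets, is the delicate point. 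Once that is pinned down, strict containment and the alternating structure are immediate.
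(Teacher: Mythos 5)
Your proposal is correct and follows essentially the same route as the paper: termination via the dimension drop of frames, and correctness via Lemma \ref{lem:descriptionofimage} together with the two-sided comparison you flag as the delicate point (every component of the canonical $V_i$ appears at depth $i$, and every depth-$i$ label is contained in $V_i$). The paper packages that second part as a single induction on $i$ using Lemma \ref{lem:descriptionofimage}, but the content — including the identity $\zim(f|_{f^{-1}(W)}) = \overline{W\cap\im(f)}$ and the role of the cleaning steps — matches yours.
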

\begin{proof}
  The algorithm stops, as the frames (which always appear at odd levels), have dimension strictly smaller than their parents.
  
  Let $V_0 - V_1 + \dots + (-1)^\ell V_\ell$ be the \emph{canonical} representation of $\im(f)$ as in Definition \ref{def:constr}. 
  We prove the following statements by induction on $i$:
  \begin{enumerate}
\item all components of $V_i$ appear at depth $i$ of the tree,
\item all labels at depth $i$ are subvarieties of $V_i$.
\end{enumerate}
Note that for $i$ odd (resp.~even) the components of $V_i$, in the canonical representation, are the largest subvarieties in $V_{i-1}$, with generic points in (resp. not in) $\im f$. The claim is true for $i=0$. Let $Z$ be a component of $V_i$ for $i$ even (resp.~odd). Suppose $Z\subset Y$ for $Y$ a component of $V_{i-1}$. Then $Z$ must be a child of $Y$ by Lemma \ref{lem:descriptionofimage}, which proves the first point.

For the second point consider a label $W$ at depth $i$, a child of $Y$.  By induction $Y$ belongs to a component $\tilde Y$ of $V_{i-1}$. A generic point of $W$ is (resp.~is not) in the image, so $W$ must belong to a component of $V_i$.\qedhere
\end{proof}

\begin{corollary}
 The algorithm \tt{TotalImage} returns a single node if and only if $\im(f) = \zim(f)$.
\end{corollary}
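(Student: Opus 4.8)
The plan is to derive this directly from Theorem \ref{thm:main}, which says that \tt{TotalImage} outputs a constructible tree for the canonical representation $V_0 - V_1 + \dots + (-1)^\ell V_\ell$ of $\im(f)$. First I would recall the structure of that canonical representation from Definition \ref{def:constr}: one always has $V_0 = \overline{V_{-1} \cap \im(f)} = \zim(f)$, so the root of the output tree is always labeled $\zim(f)$ regardless of whether the image is closed. Thus the content of the corollary is entirely about whether the tree has \emph{additional} vertices beyond the root.

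For the forward direction, suppose \tt{TotalImage} returns a single node. By Theorem \ref{thm:main} that node is the root labeled $V_0 = \zim(f)$, and having no children means $\ell = 0$, so the canonical representation is just $\im(f) = V_0 = \zim(f)$; hence the image is closed. Conversely, suppose $\im(f) = \zim(f)$. Then in the canonical representation we have $V_1 = \overline{V_0 \setminus \im(f)} = \overline{\zim(f) \setminus \zim(f)} = \overline{\emptyset} = \emptyset$, and inductively all higher $V_i$ are empty as well, so $\ell = 0$ and the canonical tree has only the root. Since the tree returned by \tt{TotalImage} represents this canonical representation, it is a single node. One small point to address is that the construction of a constructible tree (and the cleaning procedure) could in principle leave a stray vertex; but by Theorem \ref{thm:main} the components of $V_i$ appear exactly at depth $i$ and all depth-$i$ labels are subvarieties of $V_i$, so $V_i = \emptyset$ for $i \ge 1$ forces the absence of any vertex at positive depth.

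I do not anticipate a serious obstacle here — the statement is essentially a restatement of the $\ell = 0$ case of Theorem \ref{thm:main}. The only thing that needs a moment of care is confirming that the cleaning steps of Section \ref{sec:treecleaners} cannot accidentally collapse a genuinely nontrivial tree down to a single node or, conversely, leave a redundant node when the image is closed; this follows because cleaning only removes vertices $\xn$ with $V(\xn) = V(\xn')$ for a child $\xn'$ (which would require the generic point of $V(\xn)$ to be simultaneously in and not in $\im f$, impossible unless the subtree below was already trivial) or removes a vertex contained in a sibling, neither of which changes the represented set, and Lemma \ref{lem:descriptionofimage} is preserved under cleaning.
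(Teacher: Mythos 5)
Your argument is correct and is essentially the paper's: the paper also deduces the corollary from Theorem \ref{thm:main} via the observation that the canonical representation has a single term exactly when the set is closed. Your additional remarks about cleaning are harmless but not needed once Theorem \ref{thm:main} is invoked.
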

\begin{proof}
  The canonical representation of a set has a single term if and only if the set is closed.
\end{proof}

\subsection{Running time}\label{sub:comparison}
We compared our implementation to \texttt{PolynomialMapImage}.  Now we present timings for comparison.
We used the following examples as benchmarks:
\begin{enumerate}
\item The Whitney Umbrella\label{item:whitney} example from documentation\footnote{\scriptsize \url{https://www.maplesoft.com/support/help/maple/view.aspx?path=RegularChains\%2FConstructibleSetTools\%2FPolynomialMapImage} }
for \tt{PolynomialMapImage}:
  \[
    (x,y)\mapsto (xy,x,y^2).
  \]
\item The homogenization of the map giving the Whitney Umbrella:
  \[
    (x,y,z)\mapsto (xy,xz,y^2).
  \]
\item The map $G$ given by the gradient of $xyz(x+y+z)$:
  \[
    (x,y,z) \mapsto (2xyz+y^2z+yz^2,x^2z+2xyz+xz^2,x^2y+xy^2+2xyz).
  \]
\item The composition $G\circ G$ of the map $G$ from the previous item.
\item The map defined by three random ternary cubics.
\item The map defined by three random ternary sextics.
\item The map defined by three random ternary cubics vanishing on a fixed point.
\item The map defined by three random ternary quadrics vanishing on a fixed point.
\item The Cavender--Farris--Neyman model---see Section \ref{subsec:CFN}.
\item The map defining $\imps(2,2,3)$ (after restricting the domain)---see Section \ref{subsec:IMPS}.
\end{enumerate}
\begin{figure}[h]
\centering
\makebox[0pt]{
  \small
  \begin{tabular}{l*{11}{r}}
  \toprule
  Example & 1 & 2 & 3 & 4 & 5 & 6 & 7 & 8 & 9 & 10\\
  \midrule
  \texttt{TotalImage} & 0  & 0   & 2   & 20  & 0   & 0   & 1   & 0  & 29 & 3 \\
  \texttt{PolynomialMapImage} & 0 & 0 & 22 & -- & -- & -- & -- & 3237 & 31 & 1 \\
  \bottomrule
  \end{tabular} %
} %
\caption{Timings (in seconds) on an Intel Xeon E7-8837 (2.67 GHz) processor. Runtimes of more than 100 hours are designated by ``--''.} %
\label{table:timings} %
\end{figure} %

Let us point out that the two algorithms have outputs of a different nature. As an example we compare our outputs for Item \ref{item:whitney} in our list, the Whitney Umbrella. The image is a closed surface in $\Cc^3$. This is demonstrated by the fact that our output has a single node:
\begin{verbbox}
(2) ideal(y^2*z-x^2).
\end{verbbox}
\begin{figure}[h!]
  \centering
  \theverbbox %
\end{figure} %

\noindent However, \tt{PolynomialMapImage} gives a \emph{triangular decomposition}, representing the same surface as
\begin{verbbox}
[[x^2-y^2*z], [y]], [[x, y], [1]].
\end{verbbox}
\begin{figure}[h!]
  \centering
  \theverbbox %
\end{figure} %
\enlargethispage{\baselineskip}
\vspace{-0.3in}

\section{Site-independent (cyclic) matrix product state}\label{sec:Hack}

Matrix product states ($\mps$) and their more symmetric version---site-independent cyclic matrix product states ($\imps$)--- play an important role in quantum physics and quantum chemistry \cite{Quantum}. They are applied, for instance, to compute the eigenstates of the Schr\"odinger equation. As numerical methods are often involved in their study, the question of the closedness of families of tensors that allow such representations are central and were asked by W.~Hackbusch and L.~Grasedyck.

To answer these questions we present the families of tensors that allow a representation as a matrix product state as orbits under a group action. The equivalence of the classical definition and ours is proved in the Appendix.

We begin by picking a special element in $\imps$ and describe $\imps$ as the orbit of this element with respect to change of coordinates. This allows us to work with an explicit parametrization of $\imps$ and we will show that this parametrization map does not have closed image, proving that $\imps$ is not closed. The element we pick for this purpose is the \emph{iterated matrix multiplication tensor}.

Since what we do here works equally well over $\RR$ or $\Cc$ we use the letter $\kk$ to stand for one of these fields.

\begin{definition}[Iterated matrix multiplication tensor]
For positive integers $a_1,\dots,a_q$ define the tensor $\xM_{a_1,\dots,a_q}\in \kk^{a_1\times a_2}\otimes\kk^{a_2\times a_3}\otimes\cdots\otimes\kk^{a_q\times a_1}$ as
\[
  \xM_{a_1,\dots,a_q}:=\sum_{1\leq i_j\leq a_j}e_{i_1,i_2}\otimes e_{i_2,i_3}\otimes\dots\otimes e_{i_{q-1},i_q}\otimes e_{i_q,i_1},
\]
where $e_{i_j,i_{j+1}}$ are the basis vectors of the space of matricies $\kk^{a_j\times a_{j+1}}$.
\end{definition}

The following statement maybe taken as a working definition of $\imps$ and $\mps$.
The result itself is a generalization of \cite[Proposition 2.0.1]{landsburg2012geometry}.
  \begin{proposition} \label{prop:orbit}
    The sets $\imps$ and $\mps$ may be represented as
    \begin{enumerate}
      \item $\imps(r,k,q)=\{f^{\otimes q}(\xM_{r,\dots,r}) \mid f\in \Hom(\kk^{r\times r},\kk^{k})\}$,
      \item $\mps(\mathfrak{a},\mathfrak{b},q)=\{(f_1\otimes\dots \otimes f_q)(\xM_{a_1,\dots,a_q}) \mid f_i \in \Hom(\kk^{a_i \times a_{i+1}},\kk^{b_i})\}$,\\
      where $\mathfrak{a}=(a_1,\dots,a_q),\mathfrak{b}=(b_1,\dots,b_q)$.
    \end{enumerate}
  \end{proposition}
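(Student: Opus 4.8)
The proof proposal is to unwind the classical definitions of $\mps$ and $\imps$ and identify the contraction pattern with the one appearing in the iterated matrix multiplication tensor $\xM$. Since the excerpt defers the equivalence of the classical definitions to the appendix, I would take as granted the classical description: a tensor in $\mps(\mathfrak a,\mathfrak b,q)$ is one obtained, for some bond dimensions $a_1,\dots,a_q$, as
\[
  T_{j_1,\dots,j_q} \;=\; \operatorname{tr}\!\big(M^{(1)}_{j_1} M^{(2)}_{j_2}\cdots M^{(q)}_{j_q}\big),
\]
where each $M^{(i)}_{j_i}$ ranges over $a_i\times a_{i+1}$ matrices (indices cyclic, $a_{q+1}=a_1$) as $j_i$ runs from $1$ to $b_i$; and $\imps$ is the site-independent specialization where all $a_i=r$, all $b_i=k$, and the matrices $M^{(i)}_{j}=M_j$ do not depend on the site $i$.

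\textbf{Key steps.} First I would fix the linear map $f_i\in\Hom(\kk^{a_i\times a_{i+1}},\kk^{b_i})$ and record that specifying $f_i$ is the same as specifying its values $f_i(e_{s,t})\in\kk^{b_i}$ on the standard basis, i.e.\ a collection of $b_i$ matrices $M^{(i)}_{j}$ via $(M^{(i)}_j)_{s,t} = \big(f_i(e_{s,t})\big)_j$. Second, I would compute the coordinates of $(f_1\otimes\cdots\otimes f_q)(\xM_{a_1,\dots,a_q})$ directly from the defining sum
\[
  \xM_{a_1,\dots,a_q}=\sum_{1\le i_\ell\le a_\ell} e_{i_1,i_2}\otimes e_{i_2,i_3}\otimes\cdots\otimes e_{i_q,i_1},
\]
obtaining for the $(j_1,\dots,j_q)$ entry the value $\sum_{i_1,\dots,i_q}(M^{(1)}_{j_1})_{i_1,i_2}(M^{(2)}_{j_2})_{i_2,i_3}\cdots(M^{(q)}_{j_q})_{i_q,i_1}$, which is exactly $\operatorname{tr}(M^{(1)}_{j_1}\cdots M^{(q)}_{j_q})$. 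This shows the set on the right-hand side of (2) coincides with the classical $\mps(\mathfrak a,\mathfrak b,q)$; conversely any choice of the $M^{(i)}_j$ arises from a unique tuple $(f_1,\dots,f_q)$, so the two sets are literally equal. For (1), I would specialize: taking all $a_i=r$, all $b_i=k$, and $f_i=f$ for a single $f\in\Hom(\kk^{r\times r},\kk^k)$ forces $M^{(i)}_j=M_j$ independent of $i$, so $f^{\otimes q}(\xM_{r,\dots,r})$ has entries $\operatorname{tr}(M_{j_1}M_{j_2}\cdots M_{j_q})$, which is precisely the site-independent (cyclic) $\mps$, i.e.\ $\imps(r,k,q)$. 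Again the correspondence $f\leftrightarrow (M_1,\dots,M_k)$ is a bijection, giving equality of the two sets.

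\textbf{Main obstacle.} The only real content is bookkeeping: matching the cyclic index contraction in $\xM_{a_1,\dots,a_q}$ — the chain $e_{i_1,i_2}\otimes e_{i_2,i_3}\otimes\cdots\otimes e_{i_q,i_1}$ with its wrap-around — against the trace of a product of matrices, and being careful that in the site-independent case it is genuinely the \emph{same} map $f$ applied in every tensor factor (so $f^{\otimes q}$, not an arbitrary product $f_1\otimes\cdots\otimes f_q$ with equal target dimensions). Since the classical-versus-orbit equivalence is precisely what the appendix establishes, once that is invoked the proposition reduces to the entry-wise computation above, which is immediate. I would therefore present the argument as: (i) recall the classical definition; (ii) expand $(f_1\otimes\cdots\otimes f_q)(\xM)$ coordinate-wise; (iii) recognize the trace; (iv) specialize to get $\imps$; and cite the appendix for (i).
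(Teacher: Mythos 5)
Your proposal is correct and follows essentially the same route as the paper's proof of Proposition \ref{prop:orbit-w-proof}: identify a linear map $f_i\in\Hom(\kk^{a_i\times a_{i+1}},\kk^{b_i})$ with the tuple of matrices given by its values on the standard basis, expand $(f_1\otimes\cdots\otimes f_q)(\xM)$ coordinate-wise, and recognize the cyclic contraction as the trace of the matrix product. The only cosmetic difference is that you treat the general $\mps$ case first and specialize to $\imps$, whereas the paper writes out the $\imps$ computation and notes the other case is analogous.
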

  \begin{proof}
    See Proposition \ref{prop:orbit-w-proof}
  \end{proof}
\begin{remark}\label{rem:imps_in_mps}
Clearly $\imps(r,k,q)\subset \mps((r,\dots,r),(k,\dots,k),q)$.
\end{remark}
One of the main motivations to start the work on this article was the following question posed by W.\ Hackbusch:
\begin{question}
  Is the set $\imps(r,k,q)$ closed for every $k,r$ and $q$?
\end{question}
\enlargethispage{\baselineskip}
To be more precise, W.~Hackbusch expected a negative answer to the above question and also asked for an explicit tensor $T\in \overline{\imps(r,k,q)}\setminus \imps(r,k,q)$. An analogous question for $\mps$ was asked by L.~Grasedyck in the context of quantum information theory and was completely answered in \cite{landsburg2012geometry}.

It is an easy exercise to show that when $q=2$ both $\imps$ and $\mps$ are closed. Below we will present infinitely many values of $(r,k,q)$ for which $\imps(r,k,q)$ is not closed. In fact, we give an explicit tensor $T$ in $\overline{\imps(r,k,q)}$ such that $T$ is not even in $\mps((r,\dots,r),(k,\dots,k),q)$, let alone in $\imps(r,k,q)$ (see Remark \ref{rem:imps_in_mps}). This demonstrates that $\mps$ is also not closed in these sets of examples, as predicted by Theorem 1.3.2 of \cite{landsburg2012geometry}.

Using Proposition \ref{prop:orbit} we may describe $\imps(r,k,q)$ by the following parametrization map
\[
  \Par :\Hom(\kk^{r\times r},\kk^{k}) \to (\kk^k)^{\otimes q} : M\mapsto (M^{\otimes q})(\xM_{r,\dots,r}).
\]
This puts us exactly within the context of the current article.

  We now show that the image of $\Par$ is not closed by constructing a point in its closure which is demonstrably not hit by $\Par$.
Here we will use the idea of approximating the boundary of the image (cf.~Section \ref{sec:algorithm}).
In general, the approximation is done by blowing up the indeterminacy locus and computing its image. However, individual points in this approximate boundary may be constructed analytically by approaching the indeterminacy locus along a path \[\gamma: (0,1] \to \Hom(\kk^{r\times r},\kk^{k})\] and computing the limit $\lim_{t \to 0} \Par\circ \gamma(t)$.

\begin{theorem}\label{tw:main}
$\imps(2,4,3)$ is not closed. In fact, there exists a curve $c: (0,1] \to \imps(2,4,3)$ for which $\lim_{t \to 0} c(t)$ does not even belong to $\mps((2,2,2),(4,4,4),3)$.
\end{theorem}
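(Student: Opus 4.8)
The plan is to instantiate the analytic ``approach the indeterminacy locus along a path'' method: pull $\imps(2,4,3)$ back through the parametrization $\Parr$, run an explicit one–parameter family $\gamma\colon(0,1]\to\Hom(\kk^{2\times2},\kk^4)$ into the indeterminacy locus, and show that the tensor $T:=\lim_{t\to0}\Parr(\gamma(t))$ has a ``rank–drop pattern'' forbidden to members of $\mps((2,2,2),(4,4,4),3)$. The starting point is that $\xM_{2,2,2}=\sum_{i,j,k\in\{1,2\}}e_{ij}\otimes e_{jk}\otimes e_{ki}$ is the structure tensor of $2\times2$ matrix multiplication, so for $M\in\Hom(\kk^{2\times2},\kk^4)$ and covectors $\xi,\eta,\zeta\in(\kk^4)^{*}$ one has $\Parr(M)(\xi,\eta,\zeta)=\tr\!\big(N(\xi)N(\eta)N(\zeta)\big)$, where $N(\xi)$ is the $2\times2$ matrix with entries $N(\xi)_{ij}=\langle\xi,M(e_{ij})\rangle$. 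The same computation together with Proposition \ref{prop:orbit}(2) shows that $T\in\mps((2,2,2),(4,4,4),3)$ if and only if $T(\xi,\eta,\zeta)=\tr\!\big(N_1(\xi)N_2(\eta)N_3(\zeta)\big)$ for some linear maps $N_i\colon(\kk^4)^{*}\to\kk^{2\times2}$. In coordinates, $\Parr(M)=M(e_{11})^{\otimes3}+M(e_{22})^{\otimes3}+\sum_{\mathrm{cyc}}M(e_{11})\otimes M(e_{12})\otimes M(e_{21})+\sum_{\mathrm{cyc}}M(e_{12})\otimes M(e_{22})\otimes M(e_{21})$.

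For the degeneration I would take $\gamma(t)\colon e_{11}\mapsto e_2+te_3,\ e_{12}\mapsto t^{-1}e_1,\ e_{21}\mapsto e_2,\ e_{22}\mapsto-e_2+te_4$, and set $c:=\Parr\circ\gamma$. Substituting into the coordinate formula, the two divergent $t^{-1}$–terms cancel because $\sum_{\mathrm{cyc}}e_2\otimes e_1\otimes e_2=\sum_{\mathrm{cyc}}e_1\otimes e_2\otimes e_2$, the cubes contribute $M(e_{11})^{\otimes3}+M(e_{22})^{\otimes3}=O(t)$, and one is left with
\[
c(t)=\big(M(e_{11})^{\otimes3}+M(e_{22})^{\otimes3}\big)+\sum_{\mathrm{cyc}}e_3\otimes e_1\otimes e_2+\sum_{\mathrm{cyc}}e_1\otimes e_4\otimes e_2\ \xrightarrow[t\to0]{}\ T,
\]
where $T:=\sum_{\mathrm{cyc}}e_3\otimes e_1\otimes e_2+\sum_{\mathrm{cyc}}e_1\otimes e_4\otimes e_2$. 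Since each $c(t)\in\imps(2,4,3)$, this yields $T\in\overline{\imps(2,4,3)}$; the same $\gamma$ works verbatim over $\RR$.

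It then remains to prove $T\notin\mps((2,2,2),(4,4,4),3)$. Suppose $T(\xi,\eta,\zeta)=\tr(N_1(\xi)N_2(\eta)N_3(\zeta))$. For fixed $\xi$, let $G_\xi$ be the $4\times4$ Gram matrix of $(\eta,\zeta)\mapsto T(\xi,\eta,\zeta)$; reading it off from $T$ one computes $\det G_\xi=-\xi_1^{2}\xi_2^{2}$, which is not identically zero, and $G_\xi=0$ forces $\xi=0$. On the other hand this bilinear form is the composite of $N_2$, left–multiplication $P\mapsto N_1(\xi)P$ on $\kk^{2\times2}$, the perfect trace pairing, and $N_3$; since left–multiplication by a $2\times2$ matrix has determinant $\det(N_1(\xi))^{2}$, we get $\det G_\xi=\pm\det(N_2)\det(N_3)\,\det(N_1(\xi))^{2}$. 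Non-vanishing of $\det G_\xi$ forces $N_2,N_3$ invertible, and forces $\det N_1(\xi)$ to be a quadratic form whose square is a nonzero scalar multiple of $(\xi_1\xi_2)^{2}$; by unique factorization $\det N_1(\xi)=\mu\,\xi_1\xi_2$ with $\mu\neq0$. But $N_1(\xi)=0\Rightarrow G_\xi=0\Rightarrow\xi=0$, so $N_1$ is injective, hence a linear isomorphism $(\kk^4)^{*}\xrightarrow{\sim}\kk^{2\times2}$; consequently $\{\det N_1(\xi)=0\}=N_1^{-1}\{\det=0\}$ is irreducible, being the preimage of the irreducible quadric $\{\det=0\}\subset\kk^{2\times2}$. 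This contradicts $\{\det N_1(\xi)=0\}=\{\xi_1=0\}\cup\{\xi_2=0\}$. Hence $T\notin\mps((2,2,2),(4,4,4),3)$, and a fortiori $T\notin\imps(2,4,3)$ by Remark \ref{rem:imps_in_mps}; since $T=\lim_{t\to0}c(t)$ with $c(t)\in\imps(2,4,3)$, the set $\imps(2,4,3)$ is not closed.

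The first two steps are routine bookkeeping once one knows what to aim for; the real obstacle is the last step, namely producing an obstruction that is \emph{not} destroyed by passing to $\overline{\mps}$ — which is harmless to detect, since $T$ lies there — but that does obstruct genuine membership. The mechanism is the identity $\det G_\xi=(\mathrm{const})\cdot\det(N_1(\xi))^{2}$, which forces the ``rank–drop locus'' $\{\det N_1=0\}$ to be a perfect square of a quadratic form; the contradiction is then extracted by engineering $T$ (equivalently $\gamma$) so that this square is the \emph{reducible} polynomial $(\xi_1\xi_2)^{2}$ while $G_\xi$ still has no nonzero kernel vector. Checking that a single concrete $T$ meets all of these requirements simultaneously, and that it is genuinely a limit of $\imps$ points, is where the work lies; everything else is formal.
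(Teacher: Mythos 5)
Your proof is correct, and I verified the computations: the two $t^{-1}$ terms do cancel (both cyclic sums equal $e_1\otimes e_2\otimes e_2+e_2\otimes e_1\otimes e_2+e_2\otimes e_2\otimes e_1$), the cubes are $O(t)$, one indeed gets $\det G_\xi=-\xi_1^2\xi_2^2$, and left multiplication by $A$ on $\kk^{2\times 2}$ has determinant $(\det A)^2$. The degeneration half is essentially the paper's: after relabeling $e_3,e_1,e_2,e_4$ as $b_1,b_2,b_3,b_4$ your limit $T$ is exactly the tensor $D$ that the paper produces from $\gamma(t)=M+t\cdot\fl$ with $M(e_{12})=b_2$ and $M(e_{ij})=0$ otherwise; the paper keeps a polynomial path and rescales the image by $t^{-2}$, whereas you absorb the scaling into the path as $e_{12}\mapsto t^{-1}e_1$ — both are legitimate, yours slightly more directly since $c(t)=\Parr(\gamma(t))$ is literally a value of $\Parr$. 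Where you genuinely diverge is the non-membership step. The paper shows (by the same contraction/surjectivity observation you make) that all three maps must be isomorphisms, concludes that $D$ is isomorphic to $\xM_{2,2,2}$, and then invokes the Hopcroft--Kerr/Winograd theorem that the $2\times 2$ matrix multiplication tensor has rank $7$, contradicting the visible rank-$6$ decomposition of $D$. You replace this appeal to a nontrivial external rank bound with a self-contained determinantal obstruction: $\det G_\xi$ must be a nonzero constant times $\det(N_1(\xi))^2$, forcing $\det N_1(\xi)=\mu\,\xi_1\xi_2$ by unique factorization, while injectivity of $N_1$ (forced by $G_\xi=0\Rightarrow\xi=0$) makes $\det N_1(\xi)$ a linear change of variables of the irreducible rank-$4$ quadric $ad-bc$ — a contradiction. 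Your route is more elementary and works verbatim over $\RR$; the trade-off is that the paper's rank argument is the one that scales to Theorem \ref{thm:generalized_main} for all odd $q$, where the lower bound $2^{q-1}$ on the rank of $\xM_{2,\dots,2}$ does the work, whereas your determinant identity is special to the three-factor case.
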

\begin{proof}
Let $e_{11},e_{12},e_{21},e_{22}$ be the (standard) basis of $\kk^{2\times 2}$ and $b_1,\dots,b_4$ be the basis of $\kk^4$.
We fix an element $M\in \Hom(\kk^{2\times 2},\kk^4)$ which is defined by
\[
  M(e_{ij}) =
\left\{\begin{array}{ccc}
0  & : & (i,j) \neq (1,2)\hphantom{.} \\
b_2 & : & (i,j) = (1,2)  .
\end{array} \right.
\]

Note that $M$ belongs to the indeterminacy locus of $\Parr$ since $\Parr(M) = M^{\otimes 3}(\xM_{2,2,2})=0$ as can be immediately verified:
\[
  M^{\otimes 3}(\xM_{2,2,2})= \sum_{i_1,i_2,i_3} M(e_{i_1 i_2})\otimes M(e_{i_2 i_3}) \otimes M(e_{i_3 i_1}).
\]
For any term in the summand, the first factor $M(e_{i_1 i_2})$ is non-zero if and only if $i_1=1$ and $i_2=2$. But then the second factor $M(e_{i_2 i_3})$ vanishes.

Let $\fl\in \Hom(\kk^{2\times 2},\kk^4)$ be the flattening isomorphism defined by
\[
  \fl(e_{11})=b_1,\, \fl(e_{12})=b_2,\, \fl(e_{21})=b_3,\, \fl(e_{22})=b_4.
\]

Consider the curves $\gamma(t):=(M+t\cdot \fl)$ and $c(t):=\frac{1}{t^2} \Parr(\gamma(t))$. Let us denote by $e_{11}^\vee,e_{12}^\vee, e_{21}^\vee,e_{22}^\vee$ the dual basis to $e_{11},e_{12},e_{21},e_{22}$. Then we can write
\[
  \gamma(t) = t e_{11}^\vee \otimes b_1 + (1+t) e_{12}^\vee \otimes b_2 + t e_{21}^\vee \otimes b_3 + t e_{22}^\vee \otimes b_4.
\]
From this point onwards we suppress the tensor notation, writing the tensor product as ordinary product, as no confusion is likely. Recall that we have
\begin{align*}
  \xM_{2,2,2} & = e_{11}e_{11}e_{11} + e_{11}e_{12}e_{21} + e_{12}e_{21}e_{11} + e_{12}e_{22}e_{21} \\
                       & + e_{21}e_{11}e_{12} + e_{21}e_{12}e_{22} + e_{22}e_{21}e_{12} + e_{22}e_{22}e_{22}.
\end{align*}
Therefore we can write $\Parr \circ \gamma(t) = \gamma(t)^{\otimes 3}(\xM_{2,2,2})$ as follows:
\enlargethispage{\baselineskip}
\begin{align*}
  \Parr \circ \gamma(t) & = t^3 b_1^{ 3} + t^2 (1+t) b_1 b_2  b_3 + t^2 (1+t) b_2  b_3  b_1 + t^2 (1+t) b_2  b_4  b_3 \\
                       & + t^2(1+t) b_3  b_1  b_2 + t^2 (1+t) b_3  b_2  b_4 + t^2 (1+t) b_4  b_3  b_2 + t^3 b_4^{ 3}.
\end{align*}
It is now clear that $\Parr \circ \gamma(t) \neq 0$ when $t\neq 0$ so that $\Parr([\gamma(t)])$ is well-defined. We then define:
\begin{align*}
  D :={}& \lim_{t \to 0} c(t) \\
  ={}& b_1b_2b_3 + b_3b_1b_2 + b_2b_3b_1 + b_4b_3 b_2 + b_2b_4b_3 + b_3b_2b_4.
\end{align*}

We now prove that $D$ is not in $\mps$. Suppose for contradiction that it were. Then using Proposition \ref{prop:orbit} we can find three linear maps $L_1,L_2,L_3$ in $\Hom(\kk^{2\times 2}, \kk^4)$ such that $D = (L_1 \otimes L_2 \otimes L_3) (\xM_{2,2,2})$. We will now show that each $L_i$ is an isomorphism.

Denoting by $V_i \subset \kk^4$ the image of $L_i$ we have $D \in V_1 \otimes V_2 \otimes V_3$ by design. Contracting the second and third tensors via $V_2^\vee$ and $V_3^\vee$ respectively, the element $D$ may also be viewed as a linear map
\[
  D_{1} : V_2^\vee \otimes V_3^\vee \to V_1.
\]
However, it is clear that the image of $D_{1}$ is $\langle b_1,b_2,b_3,b_4 \rangle$. This forces $V_1 = \kk^4$ which in turn implies $L_1$ is an isomorphism. Similarly, we can show $L_2$ and $L_3$ are isomorphisms.

Therefore, $D$ is isomorphic to $\xM_{2,2,2}$. The multiplication tensor $\xM_{2,2,2}$ is known to have tensor rank 7 \cite{hopcroft1971minimizing, winograd1971multiplication, landsberg2006border, hauenstein2013equations}, but we already have a rank 6 decomposition of $D$, a contradiction.
\end{proof}

We stated Theorem \ref{tw:main} in a way that the proof could be written explicitly. However, with minor modification the proof extends to the case of arbitrary odd $q$.

\begin{theorem} \label{thm:generalized_main}
  $\imps(2,4,q)$ is not closed whenever $q > 1$ is odd.
\end{theorem}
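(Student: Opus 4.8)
The plan is to mimic the proof of Theorem \ref{tw:main} almost verbatim, replacing the $3$-fold tensor products by $q$-fold ones. Keep $r=2$, $k=4$, and work with $\kk^{2\times 2}$ with basis $e_{11},e_{12},e_{21},e_{22}$ and $\kk^4$ with basis $b_1,\dots,b_4$. Define $M\in\Hom(\kk^{2\times2},\kk^4)$ exactly as before, sending $e_{12}\mapsto b_2$ and all other basis vectors to $0$, and let $\fl$ be the same flattening isomorphism. Set $\gamma(t)=M+t\cdot\fl$ and $c(t)=t^{-2}\,\psi_{2,4,q}(\gamma(t))$. First I would expand $\xM_{2,2,2,\dots,2}$ (with $q$ factors) as a sum over cyclic index sequences $(i_1,\dots,i_q)$ of $e_{i_1i_2}e_{i_2i_3}\cdots e_{i_qi_1}$, and apply $\gamma(t)^{\otimes q}$. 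Since $\gamma(t)(e_{11})=t b_1$, $\gamma(t)(e_{12})=(1+t)b_2$, $\gamma(t)(e_{21})=t b_3$, $\gamma(t)(e_{22})=t b_4$, a term contributes a factor $t$ for each index pair $\neq(1,2)$ and a factor $(1+t)$ for each occurrence of $(i_j,i_{j+1})=(1,2)$. The key bookkeeping point: in a cyclic word over $\{1,2\}$, the number of "ascents" $1\to 2$ equals the number of "descents" $2\to 1$, so a word with $s$ occurrences of the pair $(1,2)$ has at least $s$ occurrences of the pair $(2,1)$, hence contributes $t$-order at least $q-s+s = q$… more precisely the total $t$-exponent is $q$ minus (number of $(1,2)$-pairs), so the lowest order terms in $t$ are exactly those cyclic words maximizing the count of $(1,2)$ blocks. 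One then checks this minimum exponent is $2$ (achieved, e.g., by the all-constant words giving $t^q$ only for $q$ small, but for the relevant terms the $(1+t)$ factors push the order down to $2$), so that $c(t)$ has a well-defined limit $D:=\lim_{t\to0}c(t)$, a sum of rank-one tensors indexed by the "extremal" cyclic words.

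Next I would identify $D$ explicitly and bound its rank. For general odd $q$ the extremal configurations are the cyclic $\{1,2\}$-words with the maximal number of $1\to2$ transitions; writing these out gives $D$ as an explicit sum of $b_{a_1}\cdots b_{a_q}$ terms, and by construction this exhibits a decomposition of $D$ as a sum of $N$ rank-one tensors for some explicit $N$. The crucial inequality to be verified is that $N$ is strictly smaller than the tensor rank of $\xM_{2,2,\dots,2}$ ($q$ factors), which is the iterated matrix multiplication tensor $\langle 2,2,\dots,2\rangle$ whose rank is known to grow like $2^{\,cq}$ (in particular, it is $7$ for $q=3$ and strictly exceeds any fixed polynomial count for larger odd $q$). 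Then I would run the same image argument as in Theorem \ref{tw:main}: if $D\in\mps((2,\dots,2),(4,\dots,4),q)$, write $D=(L_1\otimes\cdots\otimes L_q)(\xM_{2,\dots,2})$, let $V_i=\operatorname{im}L_i$, view $D$ by contraction as a map $\bigotimes_{j\neq i}V_j^\vee\to V_i$ and observe its image is all of $\kk^4$, forcing each $L_i$ to be an isomorphism; hence $D\cong\xM_{2,\dots,2}$, contradicting $\operatorname{rank}D\le N<\operatorname{rank}\xM_{2,\dots,2}$. Since $c(t)$ lands in $\imps(2,4,q)$ for $t\neq0$ and $D$ is its limit, $\imps(2,4,q)$ is not closed.

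The main obstacle I anticipate is the combinatorial step: correctly determining, for each odd $q$, which cyclic $\{1,2\}$-words are of minimal $t$-order, confirming that this minimal order is exactly $2$ (so the normalization $t^{-2}$ is the right one and the limit is finite and nonzero), and then getting a clean enough description of $D$ to both (a) read off an explicit rank-$N$ decomposition and (b) show the contraction maps $D_i$ are surjective onto $\kk^4$. The surjectivity should be robust — one just needs, for each $i$, four terms of $D$ whose $i$-th factors are $b_1,b_2,b_3,b_4$ — but the rank comparison requires the known exponential lower bound on $\operatorname{rank}\langle 2,2,\dots,2\rangle$; for odd $q>3$ I would cite the standard lower bounds for iterated/rectangular matrix multiplication rank (e.g.\ via the substitution method or the results used already for $q=3$) rather than reprove them. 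The parity hypothesis "$q$ odd" enters precisely in making the extremal cyclic words (and hence $D$) symmetric enough that the rank count $N$ stays below the multiplication-tensor rank; for even $q$ the analogous $D$ can degenerate, which is why the statement is restricted to odd $q$.
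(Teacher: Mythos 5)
Your overall strategy is exactly the paper's: degenerate along $\gamma(t)=M+t\cdot\fl$, rescale, take the limit $D$, show the contractions of $D$ are surjective so that membership in $\mps$ would force $D\cong\xM_{2,\dots,2}$, and then contradict this with a rank count ($D$ has an explicit low-rank decomposition while $\xM_{2,\dots,2}$ has rank at least $2^{q-1}$, citing the known lower bounds). So the architecture is sound and matches the paper.

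However, there is a concrete error in the one step you flagged as needing verification: the normalization. You claim the minimal $t$-order of the surviving terms is $2$ and set $c(t)=t^{-2}\Par(\gamma(t))$. Your own bookkeeping refutes this: the $t$-exponent of the term indexed by a cyclic word $(i_1,\dots,i_q)$ is $q$ minus the number of $(1,2)$-pairs, and since in a cyclic word the number of $1\to 2$ transitions equals the number of $2\to 1$ transitions, there are at most $\lfloor q/2\rfloor$ pairs equal to $(1,2)$. Hence the minimal exponent is $q-\lfloor q/2\rfloor=\lceil q/2\rceil$, which is $2$ only when $q=3$; for $q=5$ it is $3$, for $q=7$ it is $4$, and so on. With the normalization $t^{-2}$ the limit $D$ would be $0$ for every odd $q>3$, and $0\in\imps$, so the argument collapses. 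The fix is exactly the paper's choice $D=\lim_{t\to 0}t^{-\lceil q/2\rceil}\Par\circ\gamma(t)$. (Your parenthetical about the ``all-constant words giving $t^q$ only for $q$ small'' is also off: those terms always have order $q$ and never survive.) Once the exponent is corrected, the extremal cyclic words are those consisting of $\lfloor q/2\rfloor$ alternating blocks of $1$'s and $2$'s with exactly one block of length $2$; there are $2q$ of them, giving $\operatorname{rank}D\le 2q<2^{q-1}$ for odd $q\ge 3$, and the rest of your argument goes through as in the paper.
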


\begin{proof}
  Here we will simply outline the proof in comparison to the proof of Theorem \ref{tw:main}. Take the same $M \in \Hom(\kk^{2 \times 2}, \kk^4)$ and $\gamma(t) = M + t \cdot \fl$. Let $\xM_{2,\dots,2}$ be the iterated matrix product tensor with 2 repeated $q$ times. As before, define the tensor
  \begin{align*}
    D :={}& \lim_{t \to 0} \frac{1}{t^{\lceil \frac{q}{2}\rceil}}\Par \circ \gamma(t).
  \end{align*}
  It will be sufficient to show $D$ is not in $\mps$. However, the contraction maps $D_i$ induced by $D$ all have surjective images when $q$ is odd. Therefore, $D$ is in $\mps$ if and only if $D$ is isomorphic to $\xM_{2,\dots,2}$. But $D$ has rank at most $2q$ whereas $\xM_{2,\dots,2}$ has rank at least $2^{q-1}$ \cite[Proposition 20]{buhrman2016nondeterministic}.
\end{proof}

\section{Examples}\label{sec:examples}

\subsection{The Cavender--Farris--Neyman model}\label{subsec:CFN}
The examples below are inspired by statistics. They represent a type of group-based model, which is a special Markov process on trees \cite{sturmfels2005toric, michalek2011}.

The map $\varphi: \Cc^6 \to \Cc^8$ defined below represents the Cavender--Farris--Neyman model (also known as the 2-state Jukes--Cantor model) for the tripod \cite{buczynska07}:
\begin{align*}
(a,b,c,d,e,f)\mapsto &(ace + bdf, acf + bde,
    ade + bcf,
    bce + adf,\\
    & \phantom{(}bde + acf,
    bcf + ade,
    adf + bce,
    bdf + ace).
\end{align*}
There are four obvious independent linear phylogenetic invariants---linear polynomials vanishing on the image. In fact, it is well known that the closure of the image is a four dimensional linear space \cite{buczynska07, sturmfels2005toric}. Our algorithm \tt{TotalImage}$(\varphi)$ returns the output in Figure \ref{fig:graphmodelOutput}. There are four linear spaces of dimension three, whose generic points do not belong to the image closure $\zim(\varphi)$. There are six distinct planes which are added back in. This provides a complete description of the image in statistically meaningful coordinates (without applying the discrete Fourier transform).
\small\begin{verbbox}
   (4) ideal(y3-y6,y2-y5,y1-y4,y0-y7)
 - (3) |====ideal(y1-y4,y0-y2+y3-y4,-y2+y5,-y3+y6,-y2+y3-y4+y7)
 + (2) |    |====ideal(y5-y6,y4-y7,y3-y6,y2-y6,y1-y7,y0-y7)
 + (2) |    |====ideal(y6+y7,y4+y5,y3+y7,y2-y5,y1+y5,y0-y7)
 + (2) |    |====ideal(y5-y7,y4-y6,y3-y6,y2-y7,y1-y6,y0-y7)
 - (3) |====ideal(y1-y4,y0-y2-y3+y4,-y2+y5,-y3+y6,-y2-y3+y4+y7)
 + (2) |    |====ideal(y5+y6,y4+y7,y3-y6,y2+y6,y1+y7,y0-y7)
 + (2) |    |====ideal(y6-y7,y4-y5,y3-y7,y2-y5,y1-y5,y0-y7)
 + (2) |    |====ideal(y5-y7,y4-y6,y3-y6,y2-y7,y1-y6,y0-y7)
 - (3) |====ideal(y1-y4,y0+y2-y3-y4,-y2+y5,-y3+y6,y2-y3-y4+y7)
 + (2) |    |====ideal(y5-y6,y4-y7,y3-y6,y2-y6,y1-y7,y0-y7)
 + (2) |    |====ideal(y6-y7,y4-y5,y3-y7,y2-y5,y1-y5,y0-y7)
 + (2) |    |====ideal(y5+y7,y4+y6,y3-y6,y2+y7,y1+y6,y0-y7)
 - (3) |====ideal(y1-y4,y0+y2+y3+y4,-y2+y5,-y3+y6,y2+y3+y4+y7)
 + (2) |    |====ideal(y5+y6,y4+y7,y3-y6,y2+y6,y1+y7,y0-y7)
 + (2) |    |====ideal(y6+y7,y4+y5,y3+y7,y2-y5,y1+y5,y0-y7)
 + (2) |    |====ideal(y5+y7,y4+y6,y3-y6,y2+y7,y1+y6,y0-y7)
\end{verbbox}
\begin{figure}[H]
  \centering
  \theverbbox %
\caption{\texttt{TotalImage} output for $\varphi$} %
\label{fig:graphmodelOutput} %
\end{figure} %
\normalsize
As the pairs of parameters $(a,b), (c,d)$ and $(e,f)$ represent probabilities, we may add conditions $a+b=1$, $c+d=1$ and $e+f=1$. It is known \cite{filo} that this adds exactly one additional linear constraint to the closure of the image: namely that all coordinates sum up to a constant. Further, from this three-dimensional affine space we have to subtract three two-dimensional subspaces and add to each two-dimensional subspace a line and a point. 

\subsection{The locus $\imps (2,2,3)$ is closed}\label{subsec:IMPS}
Here we describe the map $\Parrr$ explicitly in coordinates.
An element in the domain of $\Parrr$ is a pair of $2 \times 2$ matricies $(M,L)$ which we write as
  \begin{align*}
    \left(\begin{bmatrix}
        A &B \\
        C &D
    \end{bmatrix},
    \begin{bmatrix}
        a &b \\
        c &d
    \end{bmatrix}\right).
  \end{align*}
  The map $\Par$ takes this to the coordinate vector
  \[\small
  \begin{bmatrix}
  \tr(MMM)\\
  \tr(LMM)\\
  \tr(MLM)\\
  \tr(MML)\\
  \tr(LLM)\\
  \tr(LML)\\
  \tr(MLL)\\
  \tr(LLL)
  \end{bmatrix}
  =
    \begin{bmatrix}
    A^3+3ABC+3BCD+D^3 \\
    A^2a+ABc+aBC+BcD+AbC+BCd+bCD+D^2d \\
    A^2a+AbC+ABc+BCd+aBC+bCD+BcD+D^2d \\
    A^2a+aBC+AbC+bCD+ABc+BcD+BCd+D^2d \\
    Aa^2+Abc+aBc+Bcd+abC+bCd+bcD+Dd^2 \\
    Aa^2+aBc+abC+bcD+Abc+Bcd+bCd+Dd^2 \\
    Aa^2+abC+Abc+bCd+aBc+bcD+cdB+Dd^2 \\
	    a^3+3abc+3bcd+d^3
    \end{bmatrix}.
  \]
  There are 4 linear relations among these polynomials which implies that the image lies in a four-dimensional subspace $U$ of $\Cc^8$. In fact, $U=S^3(\Cc^2)$ is the subspace of symmetric tensors.
  We proved that the image is closed and equals $U$ using \tt{TotalImage} in the following way. We restrict the map to pairs of matrices $(M,L)$ where $M$ is diagonal and $L$ has its
  non-diagonal entries equal. Then \tt{TotalImage} can compute that the image, even restricted to this smaller domain, is exactly $U$. In this example, one could also conclude purely theoretically that the image is closed, as the space of symmetric tensors $U$ has only three $GL(2)$ orbits.

\appendix

\section{Matrix product states}
\label{appendix:mps}

We recall here two representations of tensors that are inspired from physics \cite{perez2007matrix}.
Recall we use $\kk$ to stand for $\Cc$ or $\RR$.

For any $a \in \Zz_{>0}$ the vector space $\kk^a$ comes with the standard basis $e_1,\dots,e_a$. Therefore, a tensor $T\in \kk^{a_1}\times\dots\times\kk^{a_q}$ may be represented as
\[
  T=\sum_{1\leq i_j\leq a_j} \lambda_{i_1,\dots,i_q} e_{i_1}\otimes\dots\otimes e_{i_q},
\]
which is also written
\[
  T[i_1,\dots,i_q]=\lambda_{i_1,\dots,i_q}.
\]
\begin{definition}[Site-independent (cyclic) matrix product state]\label{def:sym}
Fix integers $r>0$, $k>0$, $q>1$ and matrices $M_i\in \kk^{r\times r}$ for $i=1,\dots,k$. Let $T\in (\kk^k)^{\otimes q}$ be a tensor given by
\[
  T[i_1,\dots,i_q]:=\tr(M_{i_1}M_{i_2}\cdots M_{i_q}).
\]
The set of all tensors that allow such a representation will be denoted by $\imps(r,k,q)\subset (\kk^k)^{\otimes q}$.
\end{definition}
\begin{example}\label{ex:przyk1}\label{ex:IMPSq=2} 
Let us consider the case of matrices ($q=2$). Here elements of  $\imps(r,k,2)$ can be viewed as matrices $M$ such that $M[i_1,i_2]=\tr(M_{i_1}M_{i_2})$. This is equivalent to a factorization of $M=A\cdot A^t$ for some matrix $A\in \Hom(\kk^r,\kk^{k^2})$. In particular, $M\in \imps(r,k,2)$ if and only if $M$ is symmetric and has rank at most $k^2$. It follows that $\imps(r,k,2)$ is closed.
\end{example}
When $q=2$ the tensor $T$ corresponds to a symmetric matrix. However, for $q >2$ the tensor $T$ will not be a symmetric tensor in general, though the identity $T[i_1,\dots,i_q]=T[i_q,i_1,\dots,i_{q-1}]$ continues to hold. In other words, the tensor has \emph{cyclic} symmetries with respect to the order of the product of the matrices.

Definition \ref{def:sym} can be regarded as a symmetrization of the following definition of a cyclic matrix product state, where the underlying graph for the tensor network is a cycle.

Fix an integer $q > 1$ and tuples of positive integers $\xa = (a_1,\dots,a_q)$, $\xb=(b_1,\dots,b_q)$. We set $a_{q+1}=a_1$. Then the locus $\mps(\xa,\xb,q) \subset \kk^{b_1}\otimes \dots \otimes \kk^{b_q}$ is given by the following definition.

\begin{definition}[Cyclic matrix product state]\label{def:MPS}
A tensor $T \in \kk^{b_1}\otimes \dots \otimes \kk^{b_q}$ is in $\mps(\xa,\xb,q)$ if there exist matrices
\[
  M_{i,j}\in \Hom_{\kk}(\kk^{a_j},\kk^{a_{j+1}}),\quad j=1,\dots,q,\, i = 1,\dots,b_j,
\]
such that
\[
  T[i_1,\dots,i_q]:=\tr(M_{i_1,1}M_{i_2,2}\cdots M_{i_q,q}).
\]
\end{definition}

\begin{example} \label{rem:MPSq=2}The situation for $q=2$ is analogous to Example \ref{ex:przyk1}. In this case, we have $M\in \mps((a_1,a_2),(b_1,b_2),2)$ if and only if $M=AB$ where $A\in \Hom(\kk^b_1,\kk^{a_1a_2})$ and $B\in \Hom(\kk^{a_1a_2},\kk^{b_2})$. This can happen if and only if the rank of the matrix $M$ is at most $a_1a_2$. Therefore, $\mps(\mathfrak{a},\mathfrak{b},2)$ is always closed.
\end{example}

  \begin{proposition} \label{prop:orbit-w-proof}
    The sets $\imps$ and $\mps$ may be represented as
    \begin{enumerate}
      \item $\imps(r,k,q)=\{f^{\otimes q}(\xM_{r,\dots,r}) \mid f\in \Hom(\kk^{r\times r},\kk^{k})\}$.
      \item $\mps(\mathfrak{a},\mathfrak{b},q)=\{(f_1\otimes\dots \otimes f_q)(\xM_{a_1,\dots,a_q}) \mid f_i \in \Hom(\kk^{a_i \times a_{i+1}},\kk^{b_i})\}$.
    \end{enumerate}
  \end{proposition}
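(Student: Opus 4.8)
The plan is to unwind both definitions and recognize the iterated matrix multiplication tensor $\xM_{a_1,\dots,a_q}$ as the "universal" instance, so that applying linear maps $f_i$ to each tensor factor produces exactly the general tensor in the relevant family. I will treat the two parts in parallel, since $\imps$ is simply the symmetrized version of $\mps$ (take all $a_i = r$, all $b_i = k$, and force $f_1 = \dots = f_q = f$).

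\medskip

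First I would set up notation. For part (2), a linear map $f_j \in \Hom(\kk^{a_j \times a_{j+1}}, \kk^{b_j})$ is determined by its values on the standard basis; writing $f_j(e_{s,t}) = \sum_{i=1}^{b_j} (M_{i,j})_{s,t}\, e_i$ defines, for each $i$, a matrix $M_{i,j} \in \kk^{a_j \times a_{j+1}} = \Hom(\kk^{a_{j+1}}, \kk^{a_j})$ (or its transpose, depending on the chosen identification — I would fix one convention and keep it throughout). Conversely any collection of matrices $M_{i,j}$ arises this way from a unique $f_j$. So it suffices to show that, under this dictionary,
\[
  (f_1 \otimes \dots \otimes f_q)(\xM_{a_1,\dots,a_q})[i_1,\dots,i_q] = \tr(M_{i_1,1} M_{i_2,2} \cdots M_{i_q,q}),
\]
which is precisely the defining formula in Definition \ref{def:MPS}.

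\medskip

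The computational heart of the argument is then a direct expansion. Apply $f_1 \otimes \dots \otimes f_q$ to
\[
  \xM_{a_1,\dots,a_q} = \sum_{1 \le s_j \le a_j} e_{s_1,s_2} \otimes e_{s_2,s_3} \otimes \dots \otimes e_{s_q,s_1},
\]
so the $(i_1,\dots,i_q)$-coordinate of the image is
\[
  \sum_{s_1,\dots,s_q} (M_{i_1,1})_{s_1,s_2} (M_{i_2,2})_{s_2,s_3} \cdots (M_{i_q,q})_{s_q,s_1}.
\]
Recognizing the right-hand side as the trace of the product of the matrices $M_{i_1,1},\dots,M_{i_q,q}$ (the cyclic chain of index contractions $s_1 \to s_2 \to \dots \to s_q \to s_1$ is exactly the definition of $\tr$ of a matrix product) completes part (2). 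For part (1), specializing $a_j \equiv r$, $b_j \equiv k$, $f_j \equiv f$ gives $\xM_{r,\dots,r} \mapsto f^{\otimes q}(\xM_{r,\dots,r})$ with coordinates $\tr(M_{i_1} \cdots M_{i_q})$ where $M_i$ corresponds to $f$ via the same dictionary, matching Definition \ref{def:sym}. One should also note that the map $\Hom(\kk^{r\times r},\kk^k) \to \{f^{\otimes q}(\xM_{r,\dots,r})\}$ is surjective onto the claimed set essentially by construction, and that $\kk^{r \times r} \cong \Hom(\kk^r, \kk^r)$ carries the basis $e_{s,t}$ used implicitly in the definition of $\xM$.

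\medskip

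I do not expect a serious obstacle here — the proposition is a bookkeeping identity. The one place to be careful is the transpose/duality convention: the tensor factor $\kk^{a_j \times a_{j+1}}$ must be identified with matrices in a way that makes the contraction pattern in $\xM_{a_1,\dots,a_q}$ align with an honest matrix product (row index of the next factor glued to the column index of the current one), and one must check that the cyclic "wrap-around" factor $e_{s_q,s_1}$ produces the trace rather than an untraced product. Fixing the convention $e_{s,t} \in \kk^{a_j} \otimes \kk^{a_{j+1}}$ viewed as the matrix with a single $1$ in position $(s,t)$, and reading $\tr(\prod M_{i_j,j})$ as $\sum_{s_1,\dots,s_q} \prod_j (M_{i_j,j})_{s_j, s_{j+1}}$ with $s_{q+1} = s_1$, makes everything line up, and the rest is the expansion above.
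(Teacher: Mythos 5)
Your proposal is correct and matches the paper's argument: both set up the dictionary between linear maps in $\Hom(\kk^{a_j\times a_{j+1}},\kk^{b_j})$ and tuples of matrices, then expand the tensor coordinate-wise and recognize the cyclic contraction $\sum_{s_1,\dots,s_q}\prod_j (M_{i_j,j})_{s_j,s_{j+1}}$ as a trace. The only cosmetic difference is that you prove part (2) and specialize to part (1), while the paper proves part (1) directly and notes that part (2) is analogous.
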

   \begin{proof}
   The proofs of both statements are similar. We prove the first one, as it is more important for this paper. We will be interpreting elements of $\Hom(\kk^{r\times r},\kk^k)$ as $r^2\times k$ matrices. First we note that there is a natural bijection $\varphi$ between $k$-tuples of $r\times r$ matrices $\mathcal{M}:=(A_1,\dots,A_k)$ and matrices $\varphi(\mathcal{M})\in \Hom(\kk^{r\times r},\kk^k)$. For $1\leq i\leq k$ the $i$-th column of $\varphi(\mathcal{M})$ is the representation of $A_i$ as a vector of length $r^2$.

   Write $M_i=\sum_{p,q=1}^r a_{i,p,q} e_{p,q}$, where $e_{p,q}$ is the matrix with a 1 in its $(p,q)$-th entry and zeros everywhere else. Note that $\varphi(\mathcal{M})(e_{p,q})=a_{i,p,q}$.

   We prove the claim by showing that the tensor $T\in \imps(r,k,q)$ associated to $\mathcal{M}$ equals $\varphi(\mathcal{M})(\xM_{r,\dots,r})$. Indeed, we have

   \begin{align*}
     T &=\sum_{1\leq i_j\leq k} \tr(M_{i_1}\cdots M_{i_{q}}) e_{i_1}\otimes\dots\otimes e_{i_q}  \\
       &= \sum_{1\leq i_j\leq k}\left(\sum_{1\leq p_j\leq r} a_{i_1,p_1,p_2}a_{i_2,p_2,p_3}\cdots a_{i_{q-1},p_{q-1},p_q}a_{i_q,p_q,p_1}\right)e_{i_1}\otimes\dots\otimes e_{i_q},
   \end{align*}
   where in all sums $1\leq j\leq q$. We can simplify further:
   \begin{align*}
     T&= \sum_{\substack{1\leq i_j\leq k \\ 1\leq p_j\leq r}}(a_{i_1,p_1,p_2}e_{i_1})\otimes\cdots\otimes (a_{i_q,p_q,p_1}e_{i_q}) \\
     &=\sum_{1\leq p_j\leq r}\left(\sum_{1\leq i_1\leq k}a_{i_1,p_1,p_2}e_{i_1}\right)\otimes\cdots\otimes \left(\sum_{1\leq i_q\leq k}a_{i_q,p_q,p_1}e_{i_q}\right) \\
   &=\sum_{1\leq p_j\leq r}\varphi(\mathcal{M})(e_{p_1,p_2})\otimes\cdots\otimes\varphi(\mathcal{M})(e_{p_q,p_1}) \\
     &=\varphi(\mathcal{M})^{\otimes q}(\xM_{r,\dots,r}).\qedhere
   \end{align*}
   \end{proof}


\newcommand{\etalchar}[1]{$^{#1}$}

\end{document}